\newenvironment{spm}
	{\bigl(\begin{smallmatrix}}
	{\end{smallmatrix}\bigr)}
\newenvironment{nalign}
	{\begin{equation}\begin{aligned}}
	{\end{aligned}\end{equation}\ignorespacesafterend}
\newtheorem{theorem}{Theorem}
\newtheorem{defi}[theorem]{Definition}
\newtheorem{lemma}[theorem]{Lemma}
\newtheorem{prop}[theorem]{Proposition}
\numberwithin{equation}{section}
\numberwithin{theorem}{section}
\numberwithin{figure}{section}
\theoremstyle{remark}
\newtheorem{rem}[theorem]{Remark}
\theoremstyle{remark}
\theoremstyle{remark}
\newcommand\R{\mathbb{R}}
\newcommand\C{\mathbb{C}}
\newcommand\N{\mathcal{N}}
\newcommand\bu{\pmb u}
\newcommand\bff{\pmb f}
\newcommand\bA{\pmb A}
\newcommand\bB{\pmb B}
\newcommand\bC{\pmb C}
\newcommand\dx{\,\text{d}x}
\newcommand{\UV}{(\pmb U, V)}
\newcommand{\UVx}{\big(\pmb U(x), V(x)\big)}
\newcommand\V{\mathcal{V}}
\newcommand\bh{\bar h}
\renewcommand\L{\mathcal{L}}
\renewcommand\Re{{\rm Re \,}}
\newcommand{\retainlabel}[1]{\label{#1}\sbox0{\ref{#1}}}
\newcommand{\W}[2]
{{
	W^{#1,#2}_
	{
		\ifthenelse
		{
			\equal{#1}{2}
		}
		{
			\nu
		}
		{
		}
	}(\Omega)
}}
\newcommand{\Li}{{L^\infty(\Omega)}}
\newcommand{\Lp}[1]
{{
	L^
	{
		\ifthenelse
		{
			\isempty{#1}
		}
		{
			p
		}
		{
			#1
		}
	}(\Omega)
}}
\definecolor{lime}{HTML}{A6CE39}
\DeclareRobustCommand{\orcidicon}{%
	\begin{tikzpicture}
		\draw[lime, fill=lime] (0,0) 
		circle [radius=0.16] 
		node[white] {{\fontfamily{qag}\selectfont \tiny ID}};
		\draw[white, fill=white] (-0.0625,0.095) 
		circle [radius=0.007];
	\end{tikzpicture}
	\hspace{-2mm}
}
\begin{document} 

\title[Reaction-diffusion-ODE systems]{
Instability of all regular stationary solutions \\ to reaction-diffusion-ODE systems}

\author[S. Cygan]{Szymon Cygan \href{https://orcid.org/0000-0002-8601-829X}{\orcidicon}}
\address[S. Cygan]{	Instytut Matematyczny, Uniwersytet Wroc\l{}awski, pl. Grunwaldzki 2/4, \hbox{50-384} Wroc\l{}aw, Poland \\ 
	\href{https://orcid.org/0000-0002-8601-829X}{orcid.org/0000-0002-8601-829X}}
\email{szymon.cygan@math.uni.wroc.pl}
\urladdr {http://www.math.uni.wroc.pl/~scygan}

\author[A. Marciniak-Czochra]{Anna Marciniak-Czochra
	\href{https://orcid.org/0000-0002-5831-6505}{\orcidicon}}
\address[A. Marciniak-Czochra]{
	Institute of Applied Mathematics,  Interdisciplinary Center for Scientific Computing (IWR) and BIOQUANT, University of Heidelberg, 69120 Heidelberg, Germany\\
	\href{https://orcid.org/0000-0002-5831-6505}{orcid.org/0000-0002-5831-6505}}
\email{anna.marciniak@iwr.uni-heidelberg.de}
\urladdr {http://www.biostruct.uni-hd.de/}

\author[G. Karch]{Grzegorz Karch \href{https://orcid.org/0000-0001-9390-5578}{\orcidicon}}
\address[G. Karch]{	
	Instytut Matematyczny, Uniwersytet Wroc\l{}awski, pl. Grunwaldzki 2/4, \hbox{50-384} Wroc\l{}aw, Poland \\ 
	\href{https://orcid.org/0000-0001-9390-5578}{orcid.org/0000-0001-9390-5578}}
\email{grzegorz.karch@math.uni.wroc.pl}
\urladdr {http://www.math.uni.wroc.pl/~karch}

\author[K. Suzuki]{Kanako Suzuki\href{https://orcid.org/0000-0001-8018-9621}{\orcidicon}}
\address[K. Suzuki]{
	Graduate School of Science and Engineering, Ibaraki University,
	2-1-1 Bunkyo, Mito 310-8512, Japan\\
	\href{https://orcid.org/0000-0001-8018-9621}{orcid.org/0000-0001-8018-9621}
}
\email{kanako.suzuki.sci2@vc.ibaraki.ac.jp}

\date{\today}

\thanks{The authors appreciated helpful comments on this work by Dr.~Chris Kowall.
S.~Cygan acknowledges a  support  by the Polish NCN grant 2016/23/B/ST1/00434.  This work was also supported by the Deutsche Forschungsgemeinschaft (DFG, German Research Foundation) under Germany's Excellence Strategy EXC 2181/1 - 390900948 (the Heidelberg STRUCTURES Excellence Cluster and SFB1324 (B05), and JSPS the Grant-in-Aid for Scientific Research (C) 18K03354 and 19K03557.}


\begin{abstract}
	A general system of several ordinary differential equations coupled with a reaction-diffusion equation in a bounded domain with zero-flux boundary condition is studied in the context of pattern formation. These initial-boundary value problems may have regular ({\it i.e.}~sufficiently smooth) stationary solutions. This class of  {\it close-to-equilibrium} patterns includes stationary solutions that emerge due to the Turing instability of a spatially constant stationary solution. The main result of this work is instability of all regular patterns. It suggests that stable stationary solutions arising in models with non-diffusive components must be {\it far-from-equilibrium} exhibiting singularities. Such discontinuous stationary solutions have been considered in our parallel work [\textit{Stable discontinuous stationary solutions to reaction-diffusion-ODE systems}, preprint (2021)].
\end{abstract}

\subjclass[2010]{35K57; 35B35; 35B36; 92C15}

\keywords{Reaction-diffusion equations; stationary solutions, stability, {\it close-to-equilibrium }patterns}
\maketitle

\section{Introduction} 

We study  properties of solutions of a  general system of $n$ ordinary differential equations coupled with a scalar reaction-diffusion equation 
\begin{nalign}
	\bu_t  &=   \bff(\bu,v), &&x\in\overline{\Omega}, \quad t>0,  \label{eq1}\\
	v_t  &=   \Delta v+g(\bu,v), &&  x\in \Omega, \quad t>0,
\end{nalign}
with an unknown vector field (denoted using a bold-face font) and a scalar function: 
\begin{nalign}
	\bu = \bu (x,t) = 
	\left( 
	\begin{array}{c}
	u_1(x,t) \\ 
	\vdots \\
	u_n(x,t)
	\end{array}
	\right) 
	\qquad\text{and}\qquad 
	v = v(x,t).
\end{nalign}
We consider arbitrary $C^2$-nonlinearities 
\begin{nalign} 
	\label{nonlinearity}
	\bff=\bff(\bu,v) = 
	\left(
	\begin{array}{c}
	f_1(\bu,v) \\
	\vdots\\
	f_n(\bu,v)
	\end{array}
	\right)
	\qquad\text{and}\qquad  
	g=g(\bu,v)
\end{nalign}
and define system \eqref{eq1}  in  a bounded open domain $\Omega\subset \R^N$ for $N \geq 1$ with $C^2$-boundary $\partial\Omega$.
Moreover,   the reaction-diffusion equation in \eqref{eq1} is 
supplemented by the homogeneous Neumann boundary condition \noeqref{Neumann}
\begin{nalign}
	\retainlabel{Neumann}
	\partial_{\nu}v  =  0  \qquad \text{for}\quad x\in \partial\Omega, \quad t>0,
\end{nalign}
where $\partial_\nu =  \nu\cdot\nabla$ with  $\nu$ denoting  the unit
outer normal vector to $\partial \Omega$. We also impose an initial condition \noeqref{ini}
\begin{nalign}
	\label{ini}
	&&\bu(x,0)  =   \bu_{0}(x)=
	\left( 
	\begin{array}{c}
	u_{0,1}(x) \\ 
	\vdots \\
	u_{0,n}(x)
	\end{array}
	\right) 
	,\qquad v(x,0)  =  v_{0}(x).
\end{nalign}

It is well-known that problem \eqref{eq1}-\eqref{ini} has a unique local-in-time solution corresponding to a continuous initial condition, see \textit{e.g.} \cite{MR3039206,MR3600397}. Our goal is to study stability of stationary solutions $\UV=\UVx$ satisfying the relation 
\begin{nalign}
	\label{eq:ImpFun}
	\pmb f \UVx  = 0, \quad x\in \Omega
\end{nalign}
and the boundary value problem 
\begin{nalign}
	\label{eq:ImFun2}
	&\Delta V + g\UV = 0, && x\in \Omega, \\
	&\partial_{\nu} V = 0, && x\in \partial\Omega.
\end{nalign}
Problem \eqref{eq:ImpFun}-\eqref{eq:ImFun2} may have different types of solutions. Here, we generalize results from works \cite{MR3973251, MR3679890, MR3600397, MR3583499, MR3345329, MR3214197, MR3059757, MR3039206, MR2833346,MR684081,MR579554,10780947202173109,MR4213664,MR730252} where the following two types of stationary solutions were considered.

\begin{enumerate}
	\item Stationary solutions of the first type are called \textit{regular}. In this case, the function  $\pmb U(x)$ is obtained from $V(x)$ by solving equation \eqref{eq:ImpFun} in such a way that $\pmb U(x) = \pmb k\big(V(x)\big)$ for some $C^2$-function $\pmb k=(k_1, ...,k_n)$.  \label{it:Type1}
	\item The second class of stationary solutions is called {\textit{jump-discontinuous}}, because $\pmb U(x)$ is obtained from $V(x)$ in a discontinuous way by choosing different branches of solutions to equation \eqref{eq:ImpFun}. \label{it:Type2} 
\end{enumerate} 
In this work, we deal with the first class of solutions and we show that all regular stationary solutions  are unstable -- see Theorems \ref{thm:Aposit} and \ref{thm:non-const} below for precise statements of our results. In our second work \cite{CMCKS02}, we study discontinuous stationary solutions to problem \eqref{eq1}-\eqref{ini} 
and we find sufficient conditions for their existence and stability. Other types of stationary solutions, different from those mentioned above, may also exist and we comment them in Remark \ref{rem:Other} below.

It is well known that a single autonomous reaction diffusion equation $v_t = \Delta v + h(v)$ in a bounded convex domain and with the Neumann boundary condition does not support interesting patterns and only constant solutions can be stable, see Casten-Holland \cite{MR480282} and Matano \cite{MR555661}. Similarly, it was shown by Kishimoto and Weinberger \cite{MR791838} that there is no stable non-constant equilibrium solutions of homogeneous cooperative-diffusion systems in convex domains with no-flux boundary conditions. Result from \cite{MR791838} were recently extended by Wang \cite{MR3921221} on reaction-diffusion-ODE cooperative systems. 

In this paper, we contribute to the theory by showing that all {\it close-to-equilibrium} (regular) patterns of  a general reaction-diffusion-ODE problem \eqref{eq1}-\eqref{ini} are unstable, regardless of the particular structure assumption on nonlinearities. In particular, it implies that such models cannot exhibit stable Turing patterns and the only possible stable stationary solutions of such models have to be somewhat  singular or discontinuous, except a certain marginal class of degenerate models discussed below in Remark \ref{rem:Other}. In this way, we generalize a series of previous works where either an existence or stability of either regular or discontinuous  stationary solutions have been analysed  in the case of specific reaction-diffusion-ODEs from mathematical biology  \cite{MR3039206,MR3600397,Kthe2020HysteresisdrivenPF,MR3973251,MR3583499}. Finally, we refer the reader to our work \cite{CMCKS02} for a discussion of discontinuous stationary solutions for some particular models considered \textit{e.g.}~in papers \cite{10780947202173109,MR3214197,MR3583499,Kthe2020HysteresisdrivenPF,MR2205561,MR3679890, MR3973251, MR3039206, MR3600397,MR3345329,MR4213664}. Related results on stationary solutions of reaction-diffusion-ODE systems can be also found in \cite{MR2297947, MR2527521,MR1036472,MR730252,MR684081,MR579554,perthame2020fast}.

In Section \ref{sec:MainResults}, we present and discuss main results of this work. In Proposition \ref{thm:reg}, we construct a non-constant family of regular stationary solutions to problem~\eqref{eq1}-\eqref{ini} using the bifurcation theory. The proof of that proposition is postponed to Section~\ref{sec:RegularSolution}. Instability results for regular stationary solutions are stated in Theorem~\ref{thm:Aposit} and Theorem~\ref{thm:non-const} whose proofs are contained in Section \ref{sec:InstRes}. In Section \ref{sec:LinearEquation}, we characterize spectra of linearised operators corresponding to problem~\eqref{eq1}-\eqref{ini}.

\subsection*{Notation.} 

By the bold font, e.g.\ $\pmb A, \pmb u,$ we denote either matrices or vector valued functions in order to distinguish them from scalar quantities.
For $\pmb f$ and $g$ defined in~\eqref{nonlinearity} we set 
\begin{nalign}
	\pmb f_{\pmb u} &= \begin{pmatrix}
		\frac{\partial f_1}{\partial u_1}  & \cdots & \frac{\partial f_1}{\partial u_n}  \\
		\vdots & & \vdots \\
		\frac{\partial f_n}{\partial u_1}  & \cdots & \frac{\partial f_n}{\partial u_n}  \\
	\end{pmatrix}, & 	
	\pmb f_{v} &= 
	\begin{pmatrix}
		\frac{\partial f_1}{\partial v}  \\
		\vdots  \\
		\frac{\partial f_n}{\partial v} \\
	\end{pmatrix}, \;  
	g_{\pmb u} &= 
	\begin{pmatrix}
		\frac{\partial g}{\partial u_1}  & \cdots & \frac{\partial g}{\partial u_n}  \\
	\end{pmatrix}, & 	\; 
	g_{v} &= \frac{\partial g}{\partial v} .
\end{nalign} 
$Y^n = {Y\times \cdots \times Y}$ ($n$-times) is the product of a given space $Y$ with the norm denoted by $\|\pmb  y\|_Y$  (instead of $\| \pmb y \|_{Y^n}$). The symbol  $\sigma(L)$ means the spectrum of a linear operator $\big( L, D(L)\big)$ and $s(L) = \sup\lbrace \Re \lambda : \lambda \in \sigma(L) \rbrace$ is the  spectral bound of an operator $\big( L, D(L)\big)$. The usual Sobolev space $W^{1,2}(\Omega)$ has the scalar product 
\begin{nalign}
	\langle u,v\rangle_{1,2}=\int_\Omega \nabla u\cdot \nabla v\,dx+\int_\Omega uv\dx.
\end{nalign}
We use the symbol  $\Delta_\nu$ for the Laplace operator in a bounded domain $\Omega$ with the Neumann boundary condition. It is defined in the usual way via a bilinear form on $W^{1,2}(\Omega)$, and $-\Delta_\nu$ has the eigenvalues $\mu_k$  satisfying $0 = \mu_0 < \mu_1 \leqslant \cdots \leqslant \mu_n \to \infty$.  Constants in the estimates below are denoted by the same letter $C$, even if they vary from line to line. Sometimes we shall emphasize dependence of such constants on parameters used in our calculations.

\section{Main results} 
\label{sec:MainResults}

The goal of this work is to construct  certain non-constant stationary solutions of problem \eqref{eq1}-\eqref{ini} and to study their stability. Thus, we deal with a solution $(\pmb U, V) = \big( \pmb U(x), V(x) \big)$ to the boundary value problem
\begin{nalign}
	\pmb f(\pmb U,V)&=0,  &&     &&x\in\overline{\Omega},  && \label{seq1}\\
	\Delta_\nu V+g(\pmb U,V)&=0,  &&  &&x\in \Omega,
\end{nalign}
with arbitrary $C^2$-functions $\pmb f$ and $g$ of the form \eqref{nonlinearity}  and in a bounded domain  
$\Omega\subset \R^N $ with  $C^2$-boundary.

\begin{defi}\label{def:stat}
	A pair $\UV = \UVx$ 
	is 
	a {\it weak solution} of problem   \eqref{seq1} 
	if 
	\begin{itemize}
		\item$\pmb U$ is measurable, 
		\item $V\in \W12$,
		\item $g(\pmb U, V) \in \big(\W12\big)^\star$ $($the dual of the space $\W12)$,    
		\item the equation $\pmb f\big(\pmb U(x),V(x)\big)=0$ is satisfied for almost all $x\in \Omega$,
		\item the equality 
			\begin{nalign}
				-\int_\Omega \nabla V(x)\cdot \nabla \varphi(x)\dx +\int_\Omega 		g\big(\pmb U(x),V(x)\big)\varphi(x)\dx=0 
			\end{nalign}
	holds true for all test functions $\varphi\in \W12$.
	\end{itemize}
\end{defi}

\begin{defi}\label{def:reg}
	A weak solution in the sense of Definition \ref{def:stat} to problem \eqref{seq1} is called regular if $\pmb U, V \in \Li$ and, moreover, 
	there exists  a $C^2$-function $\pmb k: \R\to \R^n$ such that $\pmb U(x)=\pmb k(V(x))$ for all $x\in \Omega$.
\end{defi}

\begin{rem}
	\label{thm:RegSolProp}
	Notice that every  regular solution of problem \eqref{seq1} satisfies the equation
	\begin{nalign}
		\pmb f\big(\pmb U(x),V(x)\big) = \pmb f\big(\pmb k(V(x)),V(x)\big)=0 \quad \text{for all}\quad x\in \Omega,
	\end{nalign}
	where $V=V(x)$ is a solution of  the elliptic Neumann problem  
	\begin{align}
		\Delta_\nu V+h(V)=0\qquad   \text{for}\quad x\in \Omega \label{s:h} 
	\end{align}
	with $h(V)=g\big(\pmb k(V),V\big)$. 
	Since we require $V\in \Li$ and since $\partial \Omega$ is $C^2$, by a standard elliptic regularity, we have $V\in W^{2,p}(\Omega)$ for each $p\in (1,\, \infty)$ and consequently $\pmb U \in W^{2,p}(\Omega)^{n}$ for every $p\in (1,\infty)$. In particular, every regular stationary solution satisfies $\UV \in C(\overline{\Omega})^{n+1}$.
\end{rem}
\begin{rem}
	Assume that $\UVx$ is a non-constant regular solution to problem~\eqref{def:stat}. Then, there exists $x_0 \in \Omega$ such that a vector $(\overline{\pmb U}, \overline{V}) = \big(\pmb U(x_0), V(x_0)\big)$ is a constant solution to this problem. To prove this fact, it suffices to integrate the second equation in \eqref{def:stat} over $\Omega$ and to use Neumann boundary condition to obtain $\int_{\Omega} g\big(\pmb U(x), V(x)\big) d x=0$. Since $\pmb U$ and $V$ are continuous there exists $x_0$ such that 
	$g\big(\pmb U(x_0), V(x_0)\big) =0$ and by the first equation in \eqref{def:stat} we obtain $\pmb f\big(\pmb U(x_0), V(x_0)\big)=0$.
\end{rem}

Solutions to the general Neumann elliptic problem \eqref{s:h}  has been constructed in several works. For example, classical bifurcation methods has been used in the works \cite{MR808736, MR727393,MR3679890,MR3973251} and the phase portrait method in \cite{MR3039206}. We refer the reader also to the review by Ni \cite{MR2103689} and to the references therein. Here, we recall one possible construction of solutions to problem \eqref{s:h} using a variational approach to bifurcation methods. 

\begin{prop}\label{thm:reg}
	Let $N \leqslant 6$. Consider a constant stationary solution of problem~\eqref{seq1}, namely, the constant vector $\big(\overline{\pmb U},\overline{V}\big)\in \R^{n+1}$ such that 
	\begin{nalign}
		\pmb f(\overline{\pmb U},\overline{V})=0\quad \text{and} \quad  g(\overline{\pmb U},\overline{ V})=0.
	\end{nalign}
	Define the following matrices
	\begin{nalign}
	\label{matrices}
	\pmb A_0 = D_{\pmb u} \pmb f (\overline{\pmb U},\overline{ V}), \quad  \pmb B_0 = D_{v} \pmb f (\overline{\pmb U},\overline{ V}), \quad 
	\pmb C_0 = D_{\pmb u} g (\overline{\pmb U},\overline{ V}), \quad
	d_0 = D_{v}  g (\overline{\pmb U},\overline{ V}) 
	\end{nalign}
	and assume that
	\begin{nalign}
		\det \bA_0 \neq 0 \quad\text{and} \quad \frac{1}{\det \bA_0 }\det\begin{pmatrix}
		\bA_0  & \bB_0  \\
		\bC_0  & d_0
		\end{pmatrix} =\mu_k> 0,
	\end{nalign}
	where $\mu_k$ is one of the eigenvalues of $-\Delta_\nu$. Then, there exists a sequence of real numbers  $d_\ell\to 1$ such that  the following ``perturbed'' problem
	\begin{nalign}\label{seq1:d}
	\pmb f(\pmb U,V)=0,&&     \quad &x\in\overline{\Omega},  && \\
	d_\ell \Delta_\nu V+(1-d_\ell)(V-\overline{ V})+g(\pmb U,V)=0,&&  \quad &x\in \Omega
	\end{nalign}
	has a non-constant regular solution. 
\end{prop}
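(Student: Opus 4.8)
The plan is to reduce the full system to a scalar semilinear Neumann problem and then to produce the non-constant solution by a variational bifurcation argument in which $d=1$ plays the role of the bifurcation value. First I would eliminate the ODE part. Since $\det\bA_0=\det D_{\pmb u}\pmb f(\overline{\pmb U},\overline V)\neq 0$, the implicit function theorem yields a $C^2$-map $\pmb k$, defined near $\overline V$, with $\pmb k(\overline V)=\overline{\pmb U}$ and $\pmb f(\pmb k(v),v)=0$; differentiating this identity gives $\pmb k'(\overline V)=-\bA_0^{-1}\bB_0$. Setting $h(v)=g(\pmb k(v),v)$ we obtain $h(\overline V)=0$ and, by the chain rule combined with the Schur-complement formula,
\[
h'(\overline V)=d_0-\bC_0\bA_0^{-1}\bB_0=\frac{1}{\det\bA_0}\det\begin{pmatrix}\bA_0 & \bB_0\\ \bC_0 & d_0\end{pmatrix}=\mu_k .
\]
Consequently, a regular pair $(\pmb U,V)=(\pmb k(V),V)$ solves \eqref{seq1:d} exactly when the scalar $V$ solves the Neumann problem $d_\ell\Delta_\nu V+(1-d_\ell)(V-\overline V)+h(V)=0$ in $\Omega$.

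Second, I would set up the variational framework. Writing $W=V-\overline V$ and modifying $h$ away from $\overline V$ so that it has at most quadratic growth (which does not affect the small solutions we seek), the equation becomes the Euler--Lagrange equation on $\W12$ of the functional
\[
J_d(W)=\int_\Omega\Big(\tfrac{d}{2}|\nabla W|^2-\tfrac{1-d}{2}W^2-H(\overline V+W)\Big)\dx,\qquad H'=h .
\]
Linearising at $W=0$ and testing against the eigenfunction of $-\Delta_\nu$ for $\mu_j$ gives $-d\mu_j+(1-d)+\mu_k=0$, i.e.\ $d=(1+\mu_k)/(1+\mu_j)$, which equals $1$ precisely when $\mu_j=\mu_k$. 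Thus at $d=1$ the linearised operator $\Delta_\nu+\mu_k$ is degenerate, its kernel being the finite-dimensional eigenspace of $-\Delta_\nu$ associated with $\mu_k$; since $\mu_k>\mu_0=0$, every element of this kernel is non-constant.

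Third, because the problem is of gradient type and $\mu_k$ is an isolated eigenvalue of finite multiplicity of the linearisation, a variational bifurcation theorem (of B\"ohme--Marino type, which dispenses with the odd-multiplicity hypothesis needed in the purely topological approach) shows that $(1,0)$ is a bifurcation point: there exist a sequence $d_\ell\to 1$ and corresponding non-trivial solutions $W_\ell\to 0$. A direct computation with constant $W$ shows that for $d$ near $1$ the only constant solution close to $\overline V$ is $W=0$, so each $W_\ell$ is genuinely non-constant; combined with $V_\ell\in\Li$, the elliptic regularity recorded in Remark \ref{thm:RegSolProp}, and $\pmb U_\ell=\pmb k(V_\ell)$, this yields the required non-constant regular solutions.

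The step I expect to be the main obstacle is the functional-analytic set-up that makes the variational bifurcation theorem applicable, namely that $J_d$ be twice continuously differentiable on $\W12$ with compact gradient. This is exactly where $N\leqslant 6$ enters: it is equivalent to the embedding $\W12\hookrightarrow L^3(\Omega)$, which is what makes the second variation $J_d''(W)[\varphi,\psi]=\int_\Omega\big(d\,\nabla\varphi\cdot\nabla\psi-(1-d)\varphi\psi-h'(\overline V+W)\varphi\psi\big)\dx$ well defined and continuous for the quadratically growing $h$. Verifying the attendant Palais--Smale/compactness conditions, and confirming that the bifurcating branch really consists of non-constant regular solutions, are the remaining technical points.
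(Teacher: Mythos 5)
Your proposal follows essentially the same route as the paper's proof: reduction to a scalar Neumann problem via the implicit function theorem and the Schur-complement identity $h'(\overline V)=\mu_k$, truncation of the nonlinearity, variational bifurcation at $d=1$ from the finite-multiplicity eigenvalue $\mu_k$ (the paper cites Rabinowitz's potential-operator bifurcation theorem, which plays exactly the role of the B\"ohme--Marino type result you invoke), exclusion of constant bifurcating solutions, and elliptic regularity to return to the original nonlinearity and define $\pmb U_\ell=\pmb k(V_\ell)$. The only substantive difference is that the paper truncates $h$ to a function in $C_b^2(\R)$ rather than one of quadratic growth, which makes the bootstrap from $\|W_\ell\|_{W^{1,2}}\to 0$ to $\|W_\ell\|_{\Li}\to 0$ (the step you flag as a remaining technical point, needed so that the solutions of the truncated problem solve the original one) entirely routine even in the borderline case $N=6$, where quadratic growth is critical for this iteration.
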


\begin{rem}
	\label{rem:LinearExistence}
	Let us illustrate Proposition \ref{thm:reg} in the case of the linear problem 
		\begin{nalign}
		\label{eq:LinCon}
		   \bA_0  \pmb U+\bB_0 V &=0 , &     \quad &x\in\overline{\Omega},  &&
		\\     \Delta_\nu V+\bC_0   \pmb U+d_0 V &= 0,&  \quad &x\in \Omega,
	\end{nalign}
	with arbitrary constant coefficient matrices (not necessary as those in \eqref{matrices}) 
	\begin{nalign}
		\label{eq:LinConMat}
		\bA_0 &= 
		\begin{pmatrix}
			a_{11}&\dots&a_{1n}\\
			\vdots&\ddots&\vdots\\
			a_{n1}&\dots&a_{nn}
		\end{pmatrix},
		&\bB_0&= 
		\begin{pmatrix}
			b_{1}\\
			\vdots\\
			b_{n}\\
		\end{pmatrix},
		\\
		\bC_0 &= 
		\begin{pmatrix}
			c_{1}&\dots&c_{n}
		\end{pmatrix}, 
		&d_0 &= d.
	\end{nalign}
	Under the assumption
	\begin{nalign}
		\label{eq:LinearNonzeroStationarySolutions}
		\det \bA_0 \neq 0 \quad \text{and} \quad
		-\pmb C_0 \pmb A^{-1}_0 \pmb  B_0 + d_0 =\mu_k> 0
	\end{nalign}
	problem \eqref{eq:LinCon} has non-constant regular stationary solutions of the form
	\begin{nalign}
		\label{eq:LinStatSol}
	\begin{pmatrix} \pmb U\\V\end{pmatrix} =\begin{pmatrix}-\bA_0 ^{-1}\bB_0  \Phi_k\\ \Phi_k\end{pmatrix}, 
	\end{nalign}
	where  $\Phi_k$  is an eigenfunction of $-\Delta_\nu$ corresponding to the eigenvalue $\mu_k$. By relation~\eqref{eq:DetIden} below, assumptions \eqref{eq:LinearNonzeroStationarySolutions} take the form \begin{nalign}
		\label{eq:MatCon}
		\det \bA_0 \neq 0 \quad \text{and} \quad
		\frac{1}{\det \bA_0 }\det\begin{pmatrix}
		\bA_0  & \bB_0  \\
		\bC_0  & d_0 \end{pmatrix} = \mu_k >0.
	\end{nalign}
	Proposition \ref{thm:reg} shows how to extend this construction to nonlinear systems by using a bifurcation argument.
\end{rem}

In order to formulate our instability results, we associate with a matrix  
\begin{nalign}
	\bA &= \pmb A(x) =
	\begin{pmatrix}
		a_{11}(x)&\dots&a_{1n}(x)\\
		\vdots&\ddots&\vdots\\
		a_{n1}(x)&\dots&a_{nn}(x)
	\end{pmatrix},
\end{nalign}
where
\begin{nalign}
	\label{Coeffabcd}
	a_{ij} \,\in L^\infty(\Omega), \quad \text{for} \; i,j \in \lbrace 1, \cdots, n \rbrace,
\end{nalign}
the corresponding multiplication operator on $\Lp{}^n$, with $p \in [1, \, \infty)$, given by
\begin{nalign}
	\pmb \varphi \in \Lp{}^n  \mapsto \pmb A \pmb \varphi \in \Lp{}^n . 
\end{nalign}
We denote by $\sigma\big(\pmb A(\cdot) \big)$ the spectrum of this mapping and by $s\big(\pmb A(\cdot) \big)$ the spectral bound. Below in Lemma \ref{thm:SpectraOfMultiplicationByPartialyContinousFunction}, we show that in the case of $a_{i,j} \in C(\overline{\Omega})$ we have
\begin{nalign}
	s\big(\pmb A(\cdot) \big) = \sup \Big\lbrace \Re \lambda(x): \, \text{where }  \lambda(x) \text{ is an eigenvalue of } \pmb A(x) \text{ for some } x\in \overline{\Omega} \Big\rbrace
\end{nalign}

We are in a position to state main results of this work. We prove in Theorems~\ref{thm:Aposit} and  \ref{thm:non-const} below that all (\textit{i.e.}~not only those from Proposition \ref{thm:reg})  regular stationary solutions to problem \eqref{eq1}-\eqref{ini} are unstable except one degenerate case (see assumption~\eqref{eq:ConZeroDet}) which we discuss below in Remark~\ref{rem:Other}.  

\begin{theorem}\label{thm:Aposit}
	Let $N\geqslant 1$ and $\Omega \subset \R^N$ be bounded and open with a smooth boundary. Let $(\pmb U, V)$ be a weak regular stationary solution to problem \eqref{eq1}-\eqref{ini}. If
	\begin{nalign}
		\label{eq:AutoCatCon}
		s\big( f_{\pmb u} (\pmb U(\cdot), \, V(\cdot) )\big)> 0
	\end{nalign}
 	then $(\pmb U, V)$ is unstable in $C(\overline{\Omega})^{n+1}$. 
\end{theorem}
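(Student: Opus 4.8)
The plan is to apply the principle of linearized instability, which reduces the claim to showing that the linearization $\L$ of \eqref{eq1} at $\UV$ has positive spectral bound, $s(\L)>0$. Writing a perturbation as $\bu=\pmb U+\pmb\varphi$, $v=V+\psi$ and dropping quadratic terms, this linearization is the operator
\[
\L\begin{pmatrix}\pmb\varphi\\\psi\end{pmatrix}=\begin{pmatrix}\pmb f_{\pmb u}(\pmb U,V)\,\pmb\varphi+\pmb f_v(\pmb U,V)\,\psi\\ \Delta_\nu\psi+g_{\pmb u}(\pmb U,V)\,\pmb\varphi+g_v(\pmb U,V)\,\psi\end{pmatrix},
\]
acting on $C(\overline\Omega)^{n+1}$, with the Neumann condition encoded in the domain of $\Delta_\nu$ in the last component. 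By Remark \ref{thm:RegSolProp} a regular stationary solution satisfies $\UV\in C(\overline\Omega)^{n+1}$, so all the coefficient matrices above are continuous on $\overline\Omega$; in particular $\pmb f_{\pmb u}(\pmb U(\cdot),V(\cdot))$ is a multiplication operator to which Lemma \ref{thm:SpectraOfMultiplicationByPartialyContinousFunction} applies, and hypothesis \eqref{eq:AutoCatCon} furnishes a point $x_0\in\overline\Omega$ and an eigenvalue $\lambda_0$ of $\pmb f_{\pmb u}(\pmb U(x_0),V(x_0))$ with $\Re\lambda_0>0$.

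The core step is to prove $s(\L)\geqslant s\big(\pmb f_{\pmb u}(\pmb U(\cdot),V(\cdot))\big)>0$ by exhibiting $\lambda_0$ as a point of $\sigma(\L)$ through a singular (Weyl) sequence, exploiting that the first equation carries no diffusion. Let $\pmb w\neq 0$ solve $\pmb f_{\pmb u}(\pmb U(x_0),V(x_0))\,\pmb w=\lambda_0\pmb w$, and let $\rho_m$ be bump functions of unit sup-norm whose supports shrink to $x_0$. I would set $\pmb\varphi_m=\rho_m\,\pmb w$ and then solve the (scalar, elliptic) second component exactly, putting $\psi_m=(\lambda_0-\Delta_\nu-g_v)^{-1}(g_{\pmb u}\pmb\varphi_m)$, which is legitimate after choosing $\lambda_0$ off the discrete set $\sigma(\Delta_\nu+g_v)$ (harmless, since we retain freedom in $\Re\lambda_0>0$). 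Then $(\L-\lambda_0)(\pmb\varphi_m,\psi_m)$ has vanishing second component by construction, while its first component equals $\rho_m\big(\pmb f_{\pmb u}(\pmb U(\cdot),V(\cdot))-\lambda_0\big)\pmb w+\pmb f_v\psi_m$. The first summand tends to $0$ uniformly because $\big(\pmb f_{\pmb u}(\pmb U(x_0),V(x_0))-\lambda_0\big)\pmb w=0$ and the supports of $\rho_m$ shrink to $x_0$; the second tends to $0$ because the resolvent of $\Delta_\nu$ maps the mass-losing family $g_{\pmb u}\pmb\varphi_m$ (whose $L^q$-norms vanish for every finite $q$) into a sequence converging to $0$ in $C(\overline\Omega)$ by elliptic smoothing. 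Since $\|(\pmb\varphi_m,\psi_m)\|_\infty\geqslant\|\pmb\varphi_m\|_\infty=1$, this is a genuine singular sequence, whence $\lambda_0\in\sigma(\L)$ and $s(\L)\geqslant\Re\lambda_0>0$.

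The main obstacle is the final passage from $s(\L)>0$ to nonlinear instability: because the $\bu$-equation is a pure ODE, $\L$ is not sectorial and its semigroup need not obey a spectral mapping theorem, so $s(\L)>0$ does not by itself force the linearized semigroup to grow. The remedy I would invoke is the spectral analysis of Section \ref{sec:LinearEquation}: the diagonal part of $\L$ generates the direct sum of the multiplication semigroup $e^{t\pmb f_{\pmb u}(\pmb U(\cdot),V(\cdot))}$, whose growth bound equals $s\big(\pmb f_{\pmb u}(\pmb U(\cdot),V(\cdot))\big)$, and the analytic Neumann-diffusion semigroup; since the off-diagonal coupling feeds through the compact resolvent of $\Delta_\nu$, it leaves this growth untouched, so the growth bound of $e^{t\L}$ is at least $s\big(\pmb f_{\pmb u}(\pmb U(\cdot),V(\cdot))\big)>0$, and the standard linearized-instability theorem then yields instability of $\UV$ in $C(\overline\Omega)^{n+1}$. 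Equivalently, and more in the spirit of the uncoupled first equation, one may argue directly: a perturbation of $\bu$ concentrated near $x_0$ in the direction $\pmb w$ barely perturbs the diffusing component $v$, so near $x_0$ the dynamics reduces to the ODE $\pmb\varphi_t=\pmb f_{\pmb u}(\pmb U(x_0),V(x_0))\pmb\varphi$ and grows like $e^{(\Re\lambda_0)t}$; turning this heuristic into a rigorous escape estimate is precisely the delicate point.
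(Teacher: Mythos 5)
Your overall strategy is the paper's: linearize at $\UV$, show that hypothesis \eqref{eq:AutoCatCon} forces the spectrum of the linearization to meet the open right half-plane, and conclude by a linearization principle. However, at both places where your execution differs from the paper's there is a genuine gap. The first is in your Weyl-sequence construction: you set $\psi_m=(\lambda_0-\Delta_\nu-g_v)^{-1}(g_{\pmb u}\pmb\varphi_m)$ and dismiss the invertibility issue by claiming ``freedom'' in the choice of $\lambda_0$. There is no such freedom in general: by Lemma \ref{thm:SpectraOfMultiplicationByPartialyContinousFunction} the admissible $\lambda_0$ are exactly the eigenvalues, with positive real part, of the matrices $\pmb f_{\pmb u}\big(\pmb U(x),V(x)\big)$, $x\in\overline\Omega$, and Theorem \ref{thm:Aposit} covers \emph{constant} stationary solutions, for which this set is finite. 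Concretely, for $n=1$, $f(u,v)=au$ and $g(u,v)=u+av$ with $a>0$, the constant solution $(0,0)$ is regular and satisfies \eqref{eq:AutoCatCon}, the only admissible value is $\lambda_0=a$, and then $\lambda_0-\Delta_\nu-g_v=-\Delta_\nu$ is not invertible; worse, the exact equation $\Delta_\nu\psi_m=-\rho_m$ you would need has no solution at all, since $\int_\Omega\rho_m\dx\neq0$. The gap is repairable (solve modulo the finite-dimensional kernel by the Fredholm alternative and check that the discarded component of $g_{\pmb u}\pmb\varphi_m$ tends to $0$ uniformly), but the paper's proof of Theorem \ref{thm:SpectrumOfOperatorL} avoids inverting the elliptic operator altogether: there one fixes an arbitrary test function $\psi$ supported in the small ball, solves the second equation \emph{approximately for} $\pmb\varphi$ (dividing by a cut-off version of $\pmb C\pmb\xi$), and contradicts the boundedness of $(\L_p-\lambda\pmb I)^{-1}$.

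The second gap is the decisive one: the passage to nonlinear instability is misdiagnosed and, by your own admission, left unproven. Your stated obstacle does not exist: $\L_\infty$ \emph{is} sectorial, being a bounded perturbation of $\operatorname{diag}(\pmb 0,\Delta_\nu)$, and it generates an analytic semigroup (Proposition \ref{thm:LinearOperatorLAnaliticSemigroupW1p}). Conversely, your remedy is not a proof: the claim that the off-diagonal coupling ``leaves this growth untouched'' is unsupported as stated (bounded couplings can in general alter growth bounds; the true inequality $\omega_0(\L)\geqslant s(\L)>0$ follows from your spectral step together with spectral inclusion, not from a perturbation heuristic), and, more importantly, a positive growth bound of the \emph{linear} semigroup does not by itself yield Lyapunov instability of the nonlinear problem --- that is precisely the ``escape estimate'' you concede is missing. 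The paper closes this step by citing the Shatah--Strauss theorem (Theorem \ref{thm:SS}, applied as Theorem \ref{thm:NonLinInst}): for the generator of a $C_0$-semigroup, the two conditions $\sigma(\L)\cap\{\lambda\in\C:\Re\lambda>0\}\neq\emptyset$ and $\|\N(w)\|_X\leqslant C\|w\|_X^{2}$ for small $w$ (the Taylor remainder of the $C^2$ nonlinearities) already imply nonlinear instability, with no spectral mapping theorem, no spectral gap, and no growth-bound computation. Without invoking such a result, or supplying the escape estimate, your argument stops short of the theorem.
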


\begin{theorem}\label{thm:non-const}
	Let $N\geqslant 1$ and $\Omega \subset \R^N$ be bounded, open, convex and with a smooth boundary. Let $(\pmb U, V) = \big(\pmb U(x), V(x) \big)$ be a non-constant regular stationary solution of problem \eqref{eq1}-\eqref{ini} such that 
	\begin{nalign}\label{eq:sLeZero}
		s\big( f_{\pmb u} (\pmb U(\cdot), \, V(\cdot) )\big)\leqslant 0
	\end{nalign}	
	and 
	\begin{nalign}
		\label{eq:ConZeroDet}
		\det f_{\pmb u} \big(\pmb U(x), \, V(x) \big) \neq 0 \quad \text{for all} \quad x\in \overline{\Omega}. 
	\end{nalign}
	Then $(\pmb U, V)$ is unstable in $C(\overline{\Omega})^{n+1}$. 
\end{theorem}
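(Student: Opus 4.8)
The plan is to linearise problem \eqref{eq1} around the non-constant regular solution $(\pmb U, V)$ and to exhibit a point in the spectrum of the linearised operator with positive real part; by the principle of linearised instability (and the spectral characterisations established in Section~\ref{sec:LinearEquation}), this forces instability in $C(\overline\Omega)^{n+1}$. The linearisation of the ODE block is the multiplication operator $\pmb\varphi \mapsto f_{\pmb u}(\pmb U(\cdot), V(\cdot))\,\pmb\varphi$, whose spectral bound equals $s\big(f_{\pmb u}(\pmb U(\cdot),V(\cdot))\big)\leqslant 0$ by \eqref{eq:sLeZero}. The difficulty is that, unlike in Theorem~\ref{thm:Aposit}, this block alone does not supply an unstable eigenvalue, so the instability must come from the coupling with the diffusion equation. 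The key idea I would pursue is to use the Schur-complement reduction afforded by assumption \eqref{eq:ConZeroDet}: since $f_{\pmb u}(\pmb U(x),V(x))$ is invertible for every $x\in\overline\Omega$, I can formally solve the linearised ODE constraint $f_{\pmb u}\pmb\varphi + f_v\psi = \lambda\pmb\varphi$ for $\pmb\varphi$ in terms of $\psi$ and reduce the eigenvalue problem to a scalar nonlocal reaction-diffusion eigenvalue problem for $\psi$, with a $\lambda$-dependent potential.

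The central step is then to differentiate the stationary equation to produce a natural test function. Differentiating $\pmb f(\pmb U,V)=0$ gives $f_{\pmb u}\,\pmb U_{x_i} + f_v\,V_{x_i}=0$, so that $\pmb U_{x_i} = -f_{\pmb u}^{-1}f_v\,V_{x_i}$, and differentiating \eqref{s:h} gives that each $V_{x_i}$ solves a linear elliptic equation. I would form the reduced scalar operator and test it against $V_{x_i}$ (or a suitable combination of the $V_{x_i}$), exactly as in the Casten--Holland \cite{MR480282} and Matano \cite{MR555661} arguments for scalar equations that are cited in the introduction. Here convexity of $\Omega$ enters decisively: integrating by parts the Dirichlet-type form associated with the reduced operator, the boundary term carries the sign of the second fundamental form of $\partial\Omega$, and convexity makes this term have the favourable sign. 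Because $V$ is non-constant, some $V_{x_i}\not\equiv 0$, so this test function is admissible and nontrivial, and it yields a lower bound showing that the reduced quadratic form is positive somewhere.

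From this I would extract a $\lambda>0$ in the spectrum. Concretely, I would set up the reduced eigenvalue problem as $\lambda \psi = \Delta_\nu\psi + q(x,\lambda)\psi$ where $q(x,\lambda) = g_v - g_{\pmb u}\,(f_{\pmb u}-\lambda I)^{-1} f_v$ evaluated along the solution, and study the principal eigenvalue $\Lambda(\lambda)$ of the right-hand operator as a function of the parameter $\lambda\geqslant 0$. Assumption \eqref{eq:sLeZero} guarantees $(f_{\pmb u}-\lambda I)$ is invertible for all $\lambda\geqslant 0$ (since $\lambda$ exceeds every real part of an eigenvalue of $f_{\pmb u}(x)$), so $q(x,\lambda)$ is well-defined and continuous on $[0,\infty)$, and $\Lambda(\lambda)$ is continuous and monotone in $\lambda$. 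The convexity/test-function computation shows $\Lambda(0)\geqslant 0$ with the value being attained through an admissible non-constant competitor, while for large $\lambda$ one has $\Lambda(\lambda)-\lambda\to -\infty$; a fixed-point/intermediate-value argument on $\Lambda(\lambda)=\lambda$ then produces $\lambda_\ast\geqslant 0$, and a strict-inequality refinement (using that $V$ is genuinely non-constant so the Matano--Casten--Holland estimate is strict) upgrades this to $\lambda_\ast>0$, i.e. an unstable eigenvalue.

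The main obstacle I anticipate is precisely this last sign refinement: showing that the relevant principal eigenvalue is \emph{strictly} positive rather than merely nonnegative. The constant stationary solution always yields the trivial, marginal eigenvalue corresponding to $\mu_0=0$, and the delicate point is to separate the non-constant solution from this neutral case. I expect this to require the strong maximum principle together with the convexity boundary estimate to rule out equality, and it is exactly here that the degenerate case excluded by \eqref{eq:ConZeroDet} (where $f_{\pmb u}$ becomes singular and the Schur reduction breaks down) would otherwise obstruct the argument. Handling the vector (rather than scalar) structure of the potential $q(x,\lambda)$, which need not be symmetric, is the secondary technical hurdle, and I would address it by working with the spectral bound of the non-self-adjoint reduced operator and invoking the Krein--Rutman framework rather than a variational characterisation.
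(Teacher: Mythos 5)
Your proposal follows the paper's own proof essentially step for step: the Schur-complement reduction of the eigenvalue problem to the scalar operator $G(\lambda)\psi=\Delta_\nu\psi+\bigl(-g_{\pmb u}(f_{\pmb u}-\lambda \pmb I)^{-1}f_v+g_v\bigr)\psi$ (invertibility coming from \eqref{eq:sLeZero} for $\lambda>0$ and from \eqref{eq:ConZeroDet} at $\lambda=0$), the Casten--Holland/Matano test functions $V_{x_j}$ obtained by differentiating the stationary equations, convexity of $\Omega$ giving the sign of the boundary term $\partial_\nu|\nabla V|^2\leqslant 0$, the Hopf maximum principle to upgrade the principal eigenvalue bound $\eta_0(0)\geqslant 0$ to the strict inequality $\eta_0(0)>0$ using that $V$ is non-constant, and an intermediate-value argument solving $\eta_0(\lambda)=\lambda$ (continuity and boundedness suffice; your claimed monotonicity is neither true in general nor needed) to produce a positive eigenvalue and hence nonlinear instability. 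Your one anticipated ``secondary hurdle'' is illusory: for real $\lambda\geqslant 0$ the potential $-g_{\pmb u}(f_{\pmb u}-\lambda\pmb I)^{-1}f_v+g_v$ is a real \emph{scalar} function, so $G(\lambda)$ is self-adjoint and the Rayleigh-quotient characterisation --- which your test-function step in fact relies on --- applies exactly as in the paper, with no need for a Krein--Rutman framework (abandoning the variational formulation, as your last paragraph suggests, would actually obstruct the use of $V_{x_j}$ as a competitor, since $V_{x_j}$ satisfies the equation but not the Neumann boundary condition and is therefore not an eigenfunction).
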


\begin{rem}
	Let us illustrate both instability Theorems \ref{thm:Aposit} and \ref{thm:non-const} by applying them to the constant coefficients linear problem
	\begin{nalign}
		\bu_t  &=   \bA_0  \bu+\bB_0 v, &     \quad &x\in\overline{\Omega}, \quad t>0, &&
		\\  v_t  &=   \Delta_\nu v+\bC_0   \bu+d_0 v,&  \quad &x\in \Omega, \quad t>0,
	\end{nalign}
	which under condition \eqref{eq:MatCon} has a non-constant stationary solution \eqref{eq:LinStatSol}. This solution is unstable. Indeed, the second expression in \eqref{eq:MatCon} is the quotient of $n+1$ eigenvalues of the matrix $\begin{spm} \bA_0 & \bB_0 \\ \bC_0 & d_0\end{spm}$ and of $n$ eigenvalues of the matrix $\bA_0$. Obviously, all these eigenvalues are either real or pairwise conjugate. Thus, the second inequality in \eqref{eq:MatCon} implies that there exists at least one real eigenvalue $\lambda>0$. 
	If $\lambda$ is an eigenvalue of matrix $\bA_0$ then solution \eqref{eq:LinStatSol} is unstable by Theorem \ref{thm:Aposit}. Otherwise, $\lambda$ has to be an eigenvalue of matrix $\begin{spm} \pmb A_0 & \pmb B_0 \\ \pmb  C_0 & d_0 \end{spm}$ and solution \eqref{eq:LinStatSol} is unstable by Theorem~\ref{thm:non-const}, provided $\Omega$ is convex. In fact, this second instability result can be also obtained  directly from the fact that zero is an unstable solution of system \eqref{eq:LinCon} without the diffusion and the convexity of the domain is not needed.
\end{rem}

\begin{rem}
	\label{rem:Other}
	Condition \eqref{eq:ConZeroDet} has been  imposed  because 	some particular versions of  problem \eqref{eq1}-\eqref{ini} may have smooth stationary  solutions which are either  stable or unstable if $\det f_{\pmb u} \big(\pmb U(x), \, V(x) \big) = 0$ for some $x\in\overline\Omega$. As the simplest example, for $\pmb f \equiv 0$,  $g \equiv 0$, this problem has a solution of the form $\UV = (\pmb U, C)$ where $\pmb U=\pmb U(x)$ is an arbitrary smooth vector field  and $C\in \R$ is a constant. Obviously this is a stable solution, but not regular in the sense of Definition~\ref{def:reg}.	As another  example, we consider the problem 
	\begin{nalign}
		& u_t = 0,&& x\in \overline\Omega,\quad  t>0, \\
		&v_t = \Delta_\nu  v + (\mu_{1} +1)  u - v,&& x\in \Omega,\quad  t>0,
	\end{nalign}
	which has the regular stationary solution~$u=v=\Phi_1$ (recall that $\mu_1>0$ is the eigenvalue of $-\Delta_\nu$ with the eigenfunction $\Phi_1$) which is stable but not not asymptotically stable. On the other hand, the following modified problem 
\begin{nalign}\nonumber
		& u_t = 0,&& x\in \overline\Omega,\quad  t>0, \\
		&v_t = \Delta_\nu  v + (\mu_{1} -1)  u + v,&& x\in \Omega,\quad  t>0,
	\end{nalign}
	has the same regular stationary solution~$u=v=\Phi_1$  which is now unstable.
\end{rem}

\begin{rem}
	For one ODE coupled with one PDE, assumption \eqref{eq:AutoCatCon} reduces to the following inequality $f_{\pmb u} \big(\pmb U(x_0), \, V(x_0) \big)> 0$,  for some  
	$x_0 \in \overline{\Omega}$, which  is called  as an \textit{autocatalysis condition} in the work \cite{MR3600397}. In this case, an instability of all regular stationary solutions has been shown in \cite[Thm. 2.1]{MR3600397}.
\end{rem}


\section{Existence of regular stationary solutions} 
\label{sec:RegularSolution}

First, we prove a general result on the existence of non-constant solutions to Neumann boundary value problem \eqref{s:h}. This is rather  standard result obtained for example in \cite[Sec. 3]{MR808736} or \cite[Sec. 2]{MR727393} in the case of simple eigenvalues. We recall one possible construction of such solutions using variational methods in the following lemma which is a generalization of \cite[ Theorem 11.32]{MR845785}.

\begin {lemma}\label{lem:Rab}
	Let $N\leq 6$. Assume that $r\in C_b^2(\R)$ satisfies $r(0)=r'(0)=0$.
	There exists a sequence of numbers $d_\ell\to 1$ and a sequence of non-constant functions 
	$v_\ell\in W^{1,2}(\Omega)$ such that $\|v_\ell\|_{W^{1,2}} \to 0$ and which satisfy the boundary value problem
	\begin{nalign}
		\label{eq:ell:gen}
		d_\ell  \Delta_\nu v_\ell+(\mu_k+1-d_\ell) v_\ell +r(v_\ell)&=0 \qquad \text{for} \quad x\in \Omega.
	\end{nalign}
\end{lemma}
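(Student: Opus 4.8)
The plan is to recast the Neumann problem \eqref{eq:ell:gen} as a constrained critical point problem on the Hilbert space $H=\W12$ equipped with the inner product $\langle\cdot,\cdot\rangle_{1,2}$, and then to produce the required family by a variational bifurcation argument. Using the Riesz isomorphism, I would define a bounded linear operator $K\colon H\to H$ through $\langle Kv,\varphi\rangle_{1,2}=(\mu_k+1)\int_\Omega v\varphi\dx$ for all $\varphi\in H$, together with the nonlinear functional $\Psi(v)=\int_\Omega R(v)\dx$ where $R(s)=\int_0^s r(\tau)\dtau$. Setting $B(v)=\tfrac12\langle Kv,v\rangle_{1,2}+\Psi(v)$ and $A(v)=\tfrac12\|v\|_{1,2}^2$, a short computation identifies the weak form of \eqref{eq:ell:gen} with the Lagrange equation $Kv+\Psi'(v)=\lambda v$: a constrained critical point of $B$ on the sphere $S_\rho=\{v\in H:\|v\|_{1,2}=\rho\}$ with multiplier $\lambda$ is exactly a weak solution of \eqref{eq:ell:gen} with $d=\lambda$. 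It therefore suffices to find radii $\rho_\ell\to0$ and critical points $v_\ell\in S_{\rho_\ell}$ whose multipliers $\lambda_\ell$ tend to $1$.

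Next I would record the structural properties that drive the argument. The operator $K$ is compact (because $H\hookrightarrow\Lp2$ compactly by Rellich's theorem), self-adjoint and positive, and its eigenvalues are $\kappa_j=(\mu_k+1)/(\mu_j+1)$; in particular $\kappa=1$ is an isolated eigenvalue of finite multiplicity whose eigenspace equals the $\mu_k$-eigenspace of $-\Delta_\nu$ and hence, since $\mu_k>0$, consists of non-constant functions. The functional $\Psi$ is of class $C^1$ with $\Psi'(v)=o(\|v\|_{1,2})$ near the origin, and this is the step where the hypothesis $N\leq6$ is used: from $r(0)=r'(0)=0$ and $r\in C_b^2(\R)$ one gets the global bound $|r(s)|\leq C s^2$, and the Sobolev embedding $\W12\hookrightarrow\Lp3$, valid precisely when $N\leq6$, then gives $\|\Psi'(v)\|_{H^\star}\leq C\|v\|_{1,2}^2$ and $|\Psi(v)|\leq C\|v\|_{1,2}^3$.

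I would then apply variational bifurcation at the isolated eigenvalue $\kappa=1$. Since $B'=K+\Psi'$ is a compact gradient perturbation of a self-adjoint operator with $\Psi'$ of higher order, every nonzero eigenvalue of $K$ is a bifurcation point for $Kv+\Psi'(v)=\lambda v$, irrespective of multiplicity; concretely one selects the critical value of $B$ on $S_\rho$ sitting at the ``$\kappa=1$ slot'' of the spectrum by a minimax formula, the Palais--Smale condition at that level following from the compactness of $K$ and $\Psi'$. Writing $v=\rho w$ with $\|w\|_{1,2}=1$ one has $B(\rho w)/\rho^2\to\tfrac12\langle Kw,w\rangle_{1,2}$ uniformly on the unit sphere, so the rescaled critical points $w_\rho=v_\rho/\rho$ have $K$-Rayleigh quotient $\langle Kw_\rho,w_\rho\rangle_{1,2}\to1$; pairing $Kv+\Psi'(v)=\lambda v$ with $v$ yields $\lambda=\|v\|_{1,2}^{-2}\big(\langle Kv,v\rangle_{1,2}+\langle\Psi'(v),v\rangle\big)$, and since the last term is $o(\rho^2)$ this forces $\lambda_\rho\to1$ as $\rho\to0$. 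Choosing $\rho_\ell\to0$ produces $v_\ell$ with $\|v_\ell\|_{1,2}=\rho_\ell\to0$ and $d_\ell:=\lambda_\ell\to1$.

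Finally I would check non-constancy directly: if some $v_\ell\equiv c_\ell$ were constant, testing the weak form of \eqref{eq:ell:gen} with $\varphi\equiv1$ gives $(\mu_k+1-d_\ell)c_\ell+r(c_\ell)=0$, and dividing by $c_\ell\neq0$ and letting $\ell\to\infty$ (so that $d_\ell\to1$, $c_\ell\to0$ and $r(c_\ell)/c_\ell\to r'(0)=0$) would give $\mu_k=0$, contradicting $\mu_k>0$; hence $v_\ell$ is non-constant for all large $\ell$. I expect the genuine difficulty to be the bifurcation step, namely guaranteeing that the selected multiplier converges to the specific value $\kappa=1$ when $\mu_k$ is not a simple eigenvalue and when $B$ is not even, so that the usual Lyusternik--Schnirelmann genus count is unavailable. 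The resolution is that only one bifurcating branch is needed, which the variational bifurcation theorem supplies, while the quantitative estimate $\Psi'(v)=o(\|v\|_{1,2})$ coming from $N\leq6$ is what pins the limiting multiplier to $1$.
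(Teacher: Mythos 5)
Your proposal is correct and follows essentially the same route as the paper: the operator $K$ you build via the Riesz isomorphism is exactly the paper's linear part $L$, and your sphere-constrained minimax with limiting multiplier is precisely the content of the Rabinowitz bifurcation theorem for potential operators \cite[Thm.~0.2]{MR0463990} that the paper invokes at the isolated, finite-multiplicity eigenvalue corresponding to $\mu_k$, with $N\leqslant 6$ entering through the same $W^{1,2}(\Omega)\hookrightarrow L^3(\Omega)$ embedding. Your non-constancy argument (testing with $\varphi\equiv 1$ and passing to the limit to force $\mu_k=0$) is also the same one the paper uses, though the paper places it in the proof of Proposition~\ref{thm:reg} rather than inside the lemma itself.
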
 

\begin{proof}
	We prove this lemma by using the Rabinowitz Bifurcation Theorem \cite[Thm.~0.2]{MR0463990} for equations with a variational formulation.
	It is assumed in that approach that
	\begin{itemize}
		\item $E$ is a real Hilbert space,
		\item $I\in C^2(E,\R)$ with $ I'(u)=Lu+H(u)$,
		\item $L$ is linear and $H(u)=o(\|u\|)$ at $u=0$,
		\item $\mu$ is an isolated eigenvalue of $L$ of a finite multiplicity.
	\end{itemize}
	Under these assumptions, by  \cite[Thm. 0.2]{MR0463990}, the couple $(\mu,0)\in \R\times E$ is a bifurcation point of the equation 
	\begin{nalign}\label{eq:gen}
		G(\lambda ,v)\equiv Lv+H(v) -\lambda v=0
	\end{nalign}
	which means that each neighborhood of $(\mu, 0)$ contains a solution $(\lambda, v)$ with $\| v \| \neq 0$ of equation \eqref{eq:gen}. In our case, we use the the usual Sobolev space $E=W^{1,2}(\Omega)$ with the equivalent scalar product
	\begin{nalign}
		\langle u,v\rangle_\W12=\int_\Omega \nabla u\cdot \nabla v\dx+\int_\Omega uv\dx
	\end{nalign}
	as well as the functional 
	\begin{nalign}
		\label{funk}
		I(v) =\frac{\mu_k+1}{2}\int_\Omega v^2\dx +\int_\Omega R(v)\dx
	\end{nalign}
	with $R(v)=\int_0^vr(s)\,ds$. It is an elementary calculation to show that 
	\begin{itemize}
		\item $I\in C\big(W^{1,2}(\Omega),\R\big)$,
		\item it is differentiable in the Fr\'echet sense and for each $v
		\in W^{1,2}(\Omega)$
		\begin{nalign}
			DI(v) \varphi =(\mu_k+1) \int_\Omega v \varphi \dx + \int_\Omega r(v)\varphi \dx,
		\end{nalign}
		\item $DI \in C\Big(W^{1,2}(\Omega),\;$Lin$\big(W^{1,2}(\Omega),\R\big)\Big)$.
	\end{itemize}
	The second Fr\'echet derivative at the point $v\in W^{1,2}(\Omega)$ is represented by the bilinear form
	\begin{nalign}
		\big\langle D^2I(v)\varphi,\psi\big\rangle =(\mu_k+1) \int_\Omega\varphi \psi \dx + \int_\Omega r'(v)\varphi \psi \dx.
	\end{nalign}
	Let us show that $D^2 I\in C\Big(W^{1,2}(\Omega), $ Lin$ \big(W^{1,2}(\Omega), $ Lin$ (W^{1,2}(\Omega),\R)\big)\Big)$. For $v_n \to v$ in $W^{1,2}(\Omega)$ and $\psi, \varphi \in W^{1,2}(\Omega)$ we estimate
	\begin{nalign}
		\left|\big\langle\big(D^2 I(v_n) - D^2 I(v)\big)\varphi, \psi\big\rangle \right| & \leqslant  \int_\Omega |r'(v_n) - r'(v)||\varphi||\psi| \dx \\
		& \leqslant \|r''\|_\infty \int_\Omega |v_n - v| |\varphi||\psi| \dx \\ 
		& \leqslant  \|r''\|_\infty \|v_n - v\|_3 \|\varphi\|_3 \|\psi||_3  \\
		& \leqslant \|r''\|_\infty \|v_n - v\|_{W^{1,2}} \|\varphi\|_{W^{1,2}} ||\psi||_{W^{1,2}}.
	\end{nalign}
	The last inequality follows from the Sobolev embeddings with the assumption $N\leqslant 6$.

	In particular, for each test function $\varphi \in W^{1,2}(\Omega)$, we obtain
	\begin{nalign}
		I'(v)(\varphi) = (\mu_k+1)\int_\Omega v\varphi\dx +\int_\Omega r(v)\varphi \dx \equiv Lv(\varphi) +H(v)(\varphi), 
	\end{nalign}
	where, by the assumption on $r=r(v)$, we obtain immediately that 
	$H(v)=o\big(\|v\|_{W^{1,2}}\big)$ as $\|v\|_{W^{1,2}} \to 0$.

	Notice now that $\lambda=1$ is an isolated eigenvalue of finite multiplicity of the operator $L$. Indeed, this claim  is equivalent to the equality
	\begin{nalign}
		Lv(\varphi)=\langle v,\varphi\rangle_\W12 \qquad \text{for all} \quad \varphi\in W^{1,2}(\Omega),
	\end{nalign}
	that is, to the equation 
	\begin{nalign}
		(\mu_k+1)\int_\Omega v\varphi \,dx = \int_\Omega \nabla v\cdot \nabla \varphi \dx +\int_\Omega v\varphi\dx\qquad \text{for all} \quad \varphi\in W^{1,2}(\Omega),
	\end{nalign}
	which obviously reduces to the eigenvalue problem for $\Delta_\nu$. Now, we apply the fact that $\mu_k$ is an isolated eigenvalue of finite multiplicity.
	
	Thus, by the Rabinowitz  Theorem  \cite[Thm. 0.2]{MR0463990}, the couple $(1,0)$ is a bifurcation point of equation \eqref{eq:gen} which means that there exists a sequence of numbers $d_\ell\to 1$ and nonzero $\{v_\ell\}\subset  W^{1,2}(\Omega)$ such that $\|v_\ell\|_{W^{1,2}}\to 0$, satisfying the following equation 
	\begin{nalign}
		Lv_\ell(\varphi) +H(v_\ell)(\varphi) -d_\ell\langle v,\varphi\rangle_{1,2} =0\qquad \text{for all} \quad \varphi\in W^{1,2}(\Omega)
	\end{nalign}
	which, in our setting, is equivalent to the equation satisfied by the weak solutions $v_\ell\in W^{1,2}(\Omega)$ to problem \eqref{eq:ell:gen}:
	\begin{nalign}
		- d_\ell  \int_\Omega \nabla v_\ell\cdot\nabla \varphi\dx+(\mu_k+1-d_\ell) \int_\Omega v_\ell\varphi\dx +\int_\Omega r(v_\ell)\varphi\dx =0
	\end{nalign}
	for all $\varphi\in W^{1,2}(\Omega)$.
\end{proof}

\begin{proof}[Proof of Proposition \ref{thm:reg}.] 
	We apply Lemma \ref{lem:Rab} to construct non-constant solutions to the reaction-diffusion-ODE system. In the following, we denote by $B_\varepsilon(\overline{ V})$ an open ball of radius $\varepsilon>0$ centred at $\overline{ V}$. First let us prove that, in Lemma \ref{lem:Rab}, only a finite number of $v_\ell$ can be constant. Indeed, if there exists a subsequence of constants $\lbrace v_{\ell_n} \rbrace$ satisfying equation~\eqref{eq:ell:gen} such that $v_{\ell_n}\to 0$ then passing to the limit we obtain that $r'(0) =- \mu_k$ which is a contradiction. 
	
	Next, since $\det D_{\pmb u} \pmb f (\overline{\pmb U},\overline{ V})\neq0$, 
	by Implicit Function Theorem, there exists $\varepsilon>0$ and a function $\pmb k\in C^2\big(B_\varepsilon(\overline{V})\big)$ such that $\pmb k(\overline{ V})=\overline{\pmb U}$ and $\pmb f(\pmb k(v), v)=0$ for all $v \in \big(B_\varepsilon(\overline{V})\big)$. First, we show that the function $ h(v) \equiv g\big(\pmb k(v), v\big)$, defined for all  $v \in \big(B_\varepsilon(\overline{V})\big)$, satisfies the relations
	\begin{nalign}
		h(\overline{V})=0\qquad \text{and}\qquad h'(\overline{V})=\gamma\mu_k>0.
	\end{nalign}
	Indeed, the first one results from the equations $h(\overline{V})=g(\pmb k(\overline{V}),\overline{V})=g(\overline{\pmb U},\overline{V})=0$. For the second one, differentiating the function $h(v) = g\big(\pmb k(v), v\big)$   yields
	\begin{nalign}
		h^\prime (v) = D_{\pmb u} g (\pmb k(v),v)\pmb k^\prime (v)  + D_{v}  g (\pmb k(v),v). \label{th3.3-eq3}
	\end{nalign}
	On the other hand, we differentiate both sides of the equation $\pmb f \big(\pmb k(v), 	v \big) = 0$ with respect to $v$ to obtain the equality $ D_{\pmb u} \pmb f \big(\pmb k(v),v\big) \pmb k^\prime (v) + D_{v} \pmb f \big(\pmb k(v),v\big)  = 0$, or, equivalently,
	\begin{nalign}
		\pmb k^\prime (v) = - D^{-1}_{\pmb u} \pmb f (\pmb k(v),v) D_{v} \pmb f (\pmb k(v),v)  . \label{th3.3-eq4}
	\end{nalign}
	Finally, substituting equation \eqref{th3.3-eq4} into equation \eqref{th3.3-eq3}, choosing $v=\overline{V}$, and applying the notation from the statement of Theorem \ref{thm:reg} we obtain the equations
	\begin{nalign}
		h^\prime (\overline{V}) &= - D_{\pmb u} g (k(\overline{V}),\overline{ V})D^{-1}_{\pmb u} \pmb f (\pmb k(\overline{V}),\overline{ V}) D_{v} \pmb f (\pmb k(\overline{V}),\overline{ V}) + D_{v} g (k(\overline{V}),\overline{ V})\\
		& =-\pmb C_0 \pmb A^{-1}_0\pmb B_0+d_0.
	\end{nalign}
	Notice that the right-hand side satisfies the following relation
	\begin{nalign}
		\label{eq:DetIden}
		d_0 -  \bC_0 \bA ^{-1}_0 \bB_0 &=   
		\det\begin{pmatrix}
		\pmb I & \pmb 0 \\
		\pmb 0 & d_0 -  \bC_0  \bA _0^{-1} \bB_0
		\end{pmatrix} \\
		& = \frac{1}{\det \pmb A_0} \det \begin{pmatrix}
		\pmb A_0 & \pmb 0 \\
		\pmb C_0 & d_0 -  \bC_0 \bA_0 ^{-1} \bB_0
		\end{pmatrix} \\
		& = \frac{1}{\det \bA_0 }\det\left(\begin{pmatrix}
		\bA_0  & \bB_0  \\
		\bC_0  & d_0
		\end{pmatrix}\begin{pmatrix}
		\pmb{I} & -\bA_0 ^{-1}\bB_0  \\
		\textbf{0} & 1
		\end{pmatrix}\right) \\
		&= \frac{1}{\det \bA_0 }\det\begin{pmatrix}
		\bA_0  & \bB_0  \\
		\bC_0  & d_0
		\end{pmatrix}.
	\end{nalign}
	Hence, $h'(\overline{V})=\mu _k>0$.
	
	Next, we denote by $\bh$ an arbitrary extension of the function $h$ to the whole line $\R$ which satisfies    
	\begin{nalign}
		\label{h:ext}
		\bh\in C_b^2(\R) \quad\text{and}\quad
		\bh(v)=h(v)\quad  \text{for all} \quad v\in B_\varepsilon(\overline{V}).
	\end{nalign}
	By Lemma \ref{lem:Rab}, we obtain  a sequence $d_\ell\to 1$ such that the Neumann boundary value problem
	\begin{nalign}
		\label{ss:h2}
		d_\ell\Delta_\nu V_\ell+(1-d_\ell)(V_\ell-\overline{ V})+\bh(V_\ell)&=0  \quad \text{for}\quad x\in \Omega
	\end{nalign}
	has a non-constant solution $V_\ell\in W^{1,2}(\Omega)$. Indeed, it suffices to look for these solutions in the form $V_\ell = \overline{V} + z_\ell$, where  $z_\ell$ solves  the following problem
	\begin{nalign} \label{k:eq1}
		d_\ell  \Delta_\nu z_\ell + \big(\bh^\prime (\overline{V}) +1-d_\ell \big)z_\ell + r(z_\ell) &= 0 \quad \text{in}\quad \Omega, 
	\end{nalign} 
	with  $\bh^\prime (\overline{V})=\mu_k$ and  $r (z) = \bh(\overline{V}+ z)-\bh^\prime (\overline{V})z$ satisfying $r\in C_b^2(\R)$,  $r(0) = 0$, and $r^\prime (0) = 0$. 
	
	Next, by Lemma \ref{lem:Rab}, the  solutions $z_\ell$ to problem \eqref{k:eq1} satisfy $\|z_\ell\|_{W^{1,2}(\Omega)}\to 0$, hence, by the standard elliptic theory, we also have $\|z_\ell\|_{W^{2,2}(\Omega)}\to 0$. By the bootstrap arguments utilizing the elliptic $L^p$-estimates and the Sobolev embedding theorem, we conclude that $\|z_\ell\|_{W^{2,q}(\Omega)}\to 0$ for $q>N/2$ and hence $\|z_\ell\|_\Li\to 0$. In particular, if $\|z_\ell\|_\infty \leq \varepsilon $, by \eqref{h:ext}, we obtain 
	\begin{nalign}
		\bh(V_\ell)=\bh(\overline{V}+z_\ell)=h(\overline{V}+z_\ell)=h(V_\ell).
	\end{nalign}
	Thus, $V_\ell = \overline{V} + z_\ell$ is a nontrivial solution of problem \eqref{ss:h2} with $\bh$ replaced by $h$.
	
	Finally, we define $U_\ell=\pmb k(V_\ell)$ in order to obtain a nontrivial regular solution of problem \eqref{seq1:d}.
\end{proof}

\section{Linear equation and spectral analysis} 
\label{sec:LinearEquation}

\subsection{Linearised operator}
\label{sec:LinearizedSpectra}

We analyse the stability of stationary solution to problem~ \eqref{eq1}-\eqref{ini} via the usual linearisation procedure. In order to apply that approach in this section, we discuss stability properties of the zero solution to the following linear reaction diffusion-ODE system
\begin{nalign}
	\label{eq:LinearProblemDefinition}
	\pmb \varphi_t  &=   \bA  \pmb \varphi+\bB \psi, &    \quad &x\in\overline{\Omega}, \quad t>0, &&
	\\  \psi_t  &=    \Delta_\nu \psi+\bC   \pmb \varphi+d \psi,&  \quad &x\in \Omega, \quad t>0,
\end{nalign}
with matrices
\begin{nalign}
	\label{eq:LinearSystemCoefficientMatrixDefinition}
	\bA &= \pmb A(x) =
	\begin{pmatrix}
	a_{11}(x)&\dots&a_{1n}(x)\\
	\vdots&\ddots&\vdots\\
	a_{n1}(x)&\dots&a_{nn}(x)
	\end{pmatrix},
	&\bB&= \pmb B(x) =
	\begin{pmatrix}
	b_{1}(x)\\
	\vdots\\
	b_{n}(x)\\
	\end{pmatrix},
	\\
	\bC &= \pmb C(x) = 
	\begin{pmatrix}
	c_{1}(x)&\dots&c_{n}(x)
	\end{pmatrix}, 
	&d &= d(x),
\end{nalign}
where
\begin{nalign}
	\label{abcd}
	a_{ij}, \; b_i, \; c_i, \; d \in L^\infty(\Omega), \quad \text{for} \; i,j \in \lbrace 1, \cdots, n \rbrace.
\end{nalign}
In the following, we study properties of the operator $\big(\L_p, D(\L_p) \big)$ defined by the formula
\begin{nalign}
	\label{eq:LinearSystemOperatorLDefinition}
	\L_p \begin{pmatrix}
	\pmb \varphi \\ \psi
	\end{pmatrix} \equiv \begin{pmatrix}
	\pmb A \pmb \varphi+ \pmb B \psi \\ \Delta_\nu \psi + \pmb C \pmb \varphi + d \psi
	\end{pmatrix} \quad \text{with} \quad D(\L_p) = L^p(\Omega)^n \times W^{2,p}_\nu(\Omega)
\end{nalign}
for each $p\in (1,\, \infty)$, where 
\begin{nalign}
	W^{2,p}_\nu(\Omega) = \{ v\in W^{2,p}(\Omega): \, \partial_{\nu} v = 0 \text{ on } \partial \Omega \}
\end{nalign} 
is the Sobolev space $W^{2,p}(\Omega)$ supplemented with the Neumann boundary condition (see {\it e.g.}~\cite[Ch. 2, sec 2.4]{MR2573296}). 

\begin{prop}
	\label{thm:LinearOperatorLAnaliticSemigroup}
	For each $p\in (1,\infty)$ the operator $\big( \L_p, D(\L_p) \big)$ generates an analytic semigroup of linear operators on $L^p(\Omega)^{n+1}$. 
\end{prop}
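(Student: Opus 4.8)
The plan is to exploit the triangular structure of $\L_p$: the diffusion acts only on the last component, while the coupling between the equations enters solely through multiplication by bounded coefficients. Accordingly, I would split
\[
\L_p \begin{pmatrix} \pmb\varphi \\ \psi \end{pmatrix} = \L_0 \begin{pmatrix} \pmb\varphi \\ \psi \end{pmatrix} + \mathcal{M}\begin{pmatrix} \pmb\varphi \\ \psi \end{pmatrix},
\]
where $\L_0$ is the diagonal operator $(\pmb\varphi,\psi)\mapsto(\pmb 0,\, \Delta_\nu\psi)$ with domain $D(\L_0)=L^p(\Omega)^n\times W^{2,p}_\nu(\Omega)=D(\L_p)$, and $\mathcal{M}$ is the multiplication operator $(\pmb\varphi,\psi)\mapsto(\pmb A\pmb\varphi+\pmb B\psi,\, \pmb C\pmb\varphi+d\psi)$. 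The strategy then has three steps: (i) $\L_0$ generates an analytic semigroup; (ii) $\mathcal{M}$ is bounded on $L^p(\Omega)^{n+1}$; (iii) conclude by the bounded-perturbation theorem for generators of analytic semigroups.

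For step (ii), since by~\eqref{abcd} all entries $a_{ij},b_i,c_i,d$ lie in $L^\infty(\Omega)$, each block of $\mathcal{M}$ is a bounded multiplication operator on $L^p(\Omega)$, whence
\[
\|\mathcal{M}\|_{L^p(\Omega)^{n+1}\to L^p(\Omega)^{n+1}}\leqslant C\big(\|\pmb A\|_\infty+\|\pmb B\|_\infty+\|\pmb C\|_\infty+\|d\|_\infty\big)<\infty .
\]
In particular $\mathcal{M}\in \mathrm{Lin}\big(L^p(\Omega)^{n+1}\big)$, and since $\mathcal{M}$ is bounded the full domain is $D(\L_p)=D(\L_0)$, consistently with~\eqref{eq:LinearSystemOperatorLDefinition}.

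For step (i), $\L_0$ is the direct sum of the zero operator on $L^p(\Omega)^n$ and of the Neumann Laplacian on $L^p(\Omega)$. The zero operator is bounded and hence generates the uniformly continuous --- thus analytic --- semigroup $t\mapsto \mathrm{Id}$. The operator $\big(\Delta_\nu, W^{2,p}_\nu(\Omega)\big)$ is, for every $p\in(1,\infty)$ and every bounded $C^2$-domain $\Omega$, the generator of a bounded analytic semigroup on $L^p(\Omega)$; this is classical (see, e.g., \cite{MR2573296}). Because the resolvent of a direct sum acts blockwise and the two sectorial resolvent bounds combine on a common sector, $\L_0$ generates an analytic semigroup on $L^p(\Omega)^n\times L^p(\Omega)=L^p(\Omega)^{n+1}$.

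Finally, step (iii) is the standard fact that adding a bounded operator to the generator of an analytic semigroup again produces such a generator, with unchanged domain (see, e.g., \cite{MR2573296}); applied to $\L_p=\L_0+\mathcal{M}$ this yields the claim. I do not expect any genuine obstacle: the single analytic input is the sectoriality of the Neumann Laplacian on $L^p$, which is classical, whereas the remaining ingredients---boundedness of a multiplication operator, the behaviour of resolvents under direct sums, and bounded perturbation of analytic generators---are routine. The only point deserving a line of care is that the multiplication operator is bounded on $L^p$ precisely because the coefficients are essentially bounded, which is exactly assumption~\eqref{abcd}.
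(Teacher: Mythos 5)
Your proof is correct and follows essentially the same route as the paper: the paper's (one-line) proof just observes that the result is classical for the Neumann Laplacian and that $\L_p$ is a bounded perturbation of that operator, which is precisely your decomposition $\L_p=\L_0+\mathcal{M}$ combined with the bounded-perturbation theorem for analytic generators. You have merely made explicit the blockwise sectoriality of $\L_0$ and the $L^\infty$-based boundedness of $\mathcal{M}$ that the paper leaves implicit.
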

\begin{proof}
	This fact is well-known in the case of the Laplace operator with the Neumann boundary condition, see \textit{e.g.} \cite[Ch. II, Sec. 5]{MR1721989}. Here, we consider  bounded perturbations of this operator.
\end{proof}

In order to study a nonlinear stability, we need a counterpart of the semigroup from Proposition \ref{thm:LinearOperatorLAnaliticSemigroup} acting on a space of bounded continuous functions. First, we recall that the operator $\big(\Delta_\nu, D(\Delta_\nu)\big)$ with the domain
\begin{nalign}
	D(\Delta_\nu) = \left\lbrace  u\in \bigcap_{p\geqslant 1} W^{2,p}_{loc}(\Omega): \quad u, \, \Delta u \in C(\overline{\Omega}), \quad  \partial_{\nu} u = 0 \text{ on } \partial\Omega  \right\rbrace
\end{nalign}
is a sectorial operator on the space $C(\overline{\Omega})$, see {\cite[Corollary 3.1.24]{MR3012216}}.

\begin{prop}
	\label{thm:LinearOperatorLAnaliticSemigroupW1p}
	If $a_{ij}, \; b_i, \; c_i, \; d \in C(\overline{\Omega})$ for $i,j \in \lbrace 1, \cdots, n \rbrace$ then the operator 
	given by the formula
	\begin{nalign}
		\label{eq:LinearSystemOperatorLInfDefinition}
		\L_\infty \begin{pmatrix}
			\pmb \varphi \\ \psi
		\end{pmatrix} \equiv \begin{pmatrix}
			\pmb A \pmb \varphi+ \pmb B \psi \\ \Delta_\nu \psi + \pmb C \pmb \varphi + d \psi
		\end{pmatrix} \quad \text{with} \quad D(\L_\infty) = C(\overline{\Omega})^n \times D(\Delta_\nu)
	\end{nalign}
	generates an analytic semigroup of linear operators on $C(\overline{\Omega})^{n+1}$.
\end{prop}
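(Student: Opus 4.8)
The plan is to prove Proposition \ref{thm:LinearOperatorLAnaliticSemigroupW1p} in exactly the spirit of its $L^p$-counterpart, Proposition \ref{thm:LinearOperatorLAnaliticSemigroup}: I would write $\L_\infty$ as a \emph{bounded perturbation} of a block-diagonal operator whose only unbounded entry is the Laplacian $\Delta_\nu$, and then invoke the fact that a bounded perturbation of the generator of an analytic semigroup is again the generator of an analytic semigroup, with the same domain. The continuity hypothesis $a_{ij}, b_i, c_i, d \in C(\overline{\Omega})$ is used only to ensure that the coupling and multiplication terms map $C(\overline{\Omega})$ into itself.

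Concretely, I would split $\L_\infty = \L_0 + \mathcal{B}$, where
\[
	\L_0 \begin{pmatrix} \pmb \varphi \\ \psi \end{pmatrix} = \begin{pmatrix} \pmb 0 \\ \Delta_\nu \psi \end{pmatrix}, \qquad D(\L_0) = C(\overline{\Omega})^n \times D(\Delta_\nu),
\]
and
\[
	\mathcal{B} \begin{pmatrix} \pmb \varphi \\ \psi \end{pmatrix} = \begin{pmatrix} \pmb A \pmb \varphi + \pmb B \psi \\ \pmb C \pmb \varphi + d \psi \end{pmatrix}.
\]
Since each entry of $\pmb A, \pmb B, \pmb C, d$ lies in $C(\overline{\Omega})$, hence in $L^\infty(\Omega)$, the operator $\mathcal{B}$ maps $C(\overline{\Omega})^{n+1}$ continuously into itself; that is, $\mathcal{B}$ is a bounded linear operator defined on the whole space, and in particular it does not change the domain.

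Next I would argue that $\L_0$ generates an analytic semigroup on $C(\overline{\Omega})^{n+1} = C(\overline{\Omega})^n \times C(\overline{\Omega})$, which is immediate from its block-diagonal structure. On the first $n$ components $\L_0$ reduces to the zero operator, which is bounded and hence generates the (entire) semigroup $t \mapsto I$; on the last component it reduces to $\big(\Delta_\nu, D(\Delta_\nu)\big)$, which is sectorial on $C(\overline{\Omega})$ by \cite[Corollary 3.1.24]{MR3012216}. The associated semigroup is the block diagonal $e^{t\L_0} = \operatorname{diag}\big(I, e^{t\Delta_\nu}\big)$, whose holomorphic extension to a complex sector is governed entirely by $e^{t\Delta_\nu}$, the constant block $I$ being entire. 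Hence $\L_0$ is the generator of an analytic semigroup with domain $D(\L_0)$. Applying the bounded-perturbation theorem for analytic semigroups then yields that $\L_\infty = \L_0 + \mathcal{B}$ generates an analytic semigroup on $C(\overline{\Omega})^{n+1}$ with unchanged domain $D(\L_\infty) = C(\overline{\Omega})^n \times D(\Delta_\nu)$.

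There is no serious obstacle in this argument; the only point deserving a word of care is the block-diagonal reduction, namely checking that sectoriality of $\Delta_\nu$ on the scalar factor transfers to the product space and that the degenerate zero block does not spoil the sectorial resolvent estimate. This is harmless precisely because the zero block is bounded: its resolvent $\lambda^{-1}I$ satisfies the required bound on any sector avoiding the origin, so one may simply take as the common sector the one furnished by $\Delta_\nu$ (after an appropriate shift to keep the origin away from it). The rest is the routine verification, identical to the proof of Proposition \ref{thm:LinearOperatorLAnaliticSemigroup}, that bounded perturbations preserve this structure.
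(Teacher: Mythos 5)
Your proof is correct and takes essentially the same approach as the paper: the paper's own (very terse) proof likewise regards $\L_\infty$ as a bounded perturbation of the operator whose only unbounded block is the sectorial Neumann Laplacian $\big(\Delta_\nu, D(\Delta_\nu)\big)$ on $C(\overline{\Omega})$, as in \cite[Corollary 3.1.24]{MR3012216}. Your write-up merely makes explicit the block-diagonal splitting, the boundedness of the multiplication operator (which is where the continuity of the coefficients enters), and the bounded-perturbation theorem that the paper leaves implicit.
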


One proves this proposition in the same way as Proposition \ref{thm:LinearOperatorLAnaliticSemigroup}.

\subsection{Spectra of multiplication operators} 

As usual, we denote by $\sigma(L)$ the spectrum of a linear operator $\big(L, D(L)\big)$ and $s(L) = \sup \{\Re \lambda: \lambda \in \sigma(L)\}$ its spectral bound. In particular, for a square matrix $\pmb M$ with constant elements
\begin{nalign}
	s(\pmb M) = \max \{ \Re \lambda : \lambda \in \C \text{ is an eigenvalue of } \pmb M\}.
\end{nalign}
Now, we discuss spectra of matrix multiplication operators induced by matrices with  $x$-dependent and bounded coefficients. The following results can be found either in the work \cite{MR1484209} or in the PhD dissertation \cite{phdthesis} and we recall them together with the proofs in slightly less general versions. 

{First we show that the spectrum of the multiplication operator consist only of approximative eigenvalues.}

\begin{lemma}
	\label{thm:SpectraOfMultiplicationByMatrix}
	For $\pmb M(x) =\big( m_{i,j}(x) \big)_{i,j=1}^n$ with $m_{i,j} \in L^\infty(\Omega)$, we  define the multiplication operator on $L^p(\Omega)^n$ with $p\in [1,\infty)$ by the formula $\big( \pmb M (\cdot) \pmb \xi\big)(x) = \pmb M(x) \pmb \xi(x)$. Then its spectrum has the form 
	\begin{nalign}
		\label{eq:LinearSystemSpectrumAlternativeForm}
		\sigma\big(\pmb M(\cdot) \big) = 
		\left\{ \lambda \in \C: 
		\begin{array}{l}
			\forall{\varepsilon>0} \quad \exists{\Omega_\varepsilon \subset \Omega} \ open \ set \quad\exists{ \pmb \xi_\varepsilon \in \R^n} \\ 
			such \ that \ \| \pmb M(\cdot) \pmb \xi_\varepsilon - \lambda \pmb \xi_\varepsilon \|_{L^\infty(\Omega_\varepsilon)} \leqslant \varepsilon |\pmb \xi_\varepsilon|
		\end{array} 
		\right\}. 
	\end{nalign} 
\end{lemma}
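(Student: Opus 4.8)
The plan is to prove the asserted identity by two inclusions, writing $S$ for the set on the right-hand side of \eqref{eq:LinearSystemSpectrumAlternativeForm} and exploiting the basic principle that a matrix multiplication operator is boundedly invertible precisely when its symbol is pointwise invertible with an essentially bounded inverse. For a matrix $A\in\C^{n\times n}$ I write $\gamma(A)=\min_{|\pmb\xi|=1}|A\pmb\xi|$ for its smallest singular value, so that $A$ is invertible iff $\gamma(A)>0$, in which case $\|A^{-1}\|=\gamma(A)^{-1}$. The argument then reduces to relating membership $\lambda\in S$ to the behaviour of the scalar function $x\mapsto\gamma(\pmb M(x)-\lambda)$.

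For the inclusion $S\subseteq\sigma\big(\pmb M(\cdot)\big)$ I argue by approximate eigenfunctions. Fix $\lambda\in S$ and $\varepsilon>0$, take the corresponding open set $\Omega_\varepsilon$ and vector $\pmb\xi_\varepsilon$, normalised so that $|\pmb\xi_\varepsilon|=1$, and set $\pmb u_\varepsilon=|\Omega_\varepsilon|^{-1/p}\,\mathbf 1_{\Omega_\varepsilon}\,\pmb\xi_\varepsilon$, which has $\|\pmb u_\varepsilon\|_{L^p}=1$. Using the pointwise bound $|(\pmb M(x)-\lambda)\pmb\xi_\varepsilon|\leqslant\varepsilon$, valid for almost every $x\in\Omega_\varepsilon$, a direct computation gives $\|(\pmb M(\cdot)-\lambda)\pmb u_\varepsilon\|_{L^p}\leqslant\varepsilon$. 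Hence $\pmb M(\cdot)-\lambda$ is not bounded below, so it cannot be boundedly invertible, and therefore $\lambda\in\sigma\big(\pmb M(\cdot)\big)$.

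For the reverse inclusion $\sigma\big(\pmb M(\cdot)\big)\subseteq S$ I argue by contraposition. Suppose $\lambda\notin S$; then there is $\varepsilon_0>0$ such that no admissible open set and constant vector realise the bound for $\varepsilon_0$. The crucial step is to upgrade this into a uniform ellipticity bound, namely $\gamma(\pmb M(x)-\lambda)\geqslant c_0>0$ for almost every $x\in\Omega$. Once this is in hand, the pointwise inverse $\pmb R(x)=(\pmb M(x)-\lambda)^{-1}$ obeys $\|\pmb R(x)\|=\gamma(\pmb M(x)-\lambda)^{-1}\leqslant c_0^{-1}$ almost everywhere, so $\pmb R\in L^\infty(\Omega)^{n\times n}$ and the associated multiplication operator is a bounded two-sided inverse of $\pmb M(\cdot)-\lambda$ on $L^p(\Omega)^n$; thus $\lambda$ lies in the resolvent set.

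The main obstacle is exactly this deduction of the uniform lower bound from $\lambda\notin S$, and it splits into two sub-issues. First, $\gamma(\pmb M(x)-\lambda)$ is an infimum over all unit directions, whereas the definition of $S$ tests only constant directions; I would bridge this by fixing a finite $\delta$-net $\{\pmb\zeta_1,\dots,\pmb\zeta_m\}$ of the unit sphere and using $\|\pmb M\|_{L^\infty}<\infty$ together with $|A\pmb\eta-A\pmb\zeta|\leqslant\|A\|\,|\pmb\eta-\pmb\zeta|$ to replace the minimising direction at each point by a nearby net vector at an arbitrarily small cost, after which a finite-union argument produces a single net vector $\pmb\zeta_j$ for which $|(\pmb M(\cdot)-\lambda)\pmb\zeta_j|$ is small on a set of positive measure. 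Second, and genuinely delicate for merely $L^\infty$ coefficients, one must pass from such a positive-measure sublevel set to an \emph{open} set on which the inequality holds essentially; I would attempt this through the Lebesgue density theorem and approximate continuity, isolating a ball that is almost entirely contained in the sublevel set. I expect this measure-theoretic passage to be the hard part, and I note that it becomes immediate in the continuous case $a_{ij}\in C(\overline\Omega)$ used later in Lemma~\ref{thm:SpectraOfMultiplicationByPartialyContinousFunction}, since the sublevel sets of the continuous map $x\mapsto\gamma(\pmb M(x)-\lambda)$ are then open outright.
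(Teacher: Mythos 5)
Your first inclusion is essentially identical to the paper's proof (indicator-times-constant-vector approximate eigenfunctions), and the skeleton of your converse --- upgrade the failure of membership in $S$ to an essential uniform lower bound on $\gamma(\pmb M(x)-\lambda)$, then invert pointwise --- is also the paper's. The genuine gap is exactly the second sub-issue you flag, and the repair you propose cannot work. The Lebesgue density theorem yields a ball that is \emph{almost} filled by the sublevel set $\{x:\gamma(\pmb M(x)-\lambda)<\varepsilon\}$, but membership in $S$ demands that the essential supremum over the \emph{entire} open set be at most $\varepsilon|\pmb\xi_\varepsilon|$, and the exceptional subset of small positive measure still controls that supremum. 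In fact nothing can close this step, because for general $L^\infty$ entries the stated identity is false: take $n=1$, $\Omega=(0,1)$, $m=\mathds{1}_K$ with $K\subset(0,1)$ a fat Cantor set (compact, positive measure, empty interior), and $\lambda=1$. Multiplication by $m-1$ annihilates $\mathds{1}_K\neq 0$, so $1\in\sigma\big(m(\cdot)\big)$; yet for every nonempty open $U\subset(0,1)$ the set $U\setminus K$ is open and nonempty, hence of positive measure, so $\esssup_U|m-1|=1$ and therefore $1\notin S$ (with the obvious normalisation $\pmb\xi_\varepsilon\neq 0$, without which $S=\C$ trivially). So your instinct that the measure-theoretic passage is ``the hard part'' is correct in the strongest possible sense: it is not a technical difficulty but an impossibility.

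You should also know that the paper's own proof jumps over precisely this point: it asserts in one sentence that ``for all open $\Omega_\varepsilon$ and all unit $\pmb\xi_\varepsilon$ the $L^\infty(\Omega_\varepsilon)$-norm is at least $\varepsilon$'' implies ``$|\pmb M(x)\pmb\xi_\varepsilon-\lambda\pmb\xi_\varepsilon|\geqslant\varepsilon|\pmb\xi_\varepsilon|$ for almost all $x\in\Omega$'', which is exactly the implication the example above refutes; so your blind attempt has located a real soft spot rather than missed a known trick. Two repairs are available, both consistent with how the lemma is actually used. Either replace ``open set'' by ``measurable set of positive measure'' in the definition of $S$; then your $\delta$-net and pigeonhole argument (which is sound) completes the contrapositive direction in full $L^\infty$ generality, and the indicator construction for the other inclusion is unaffected. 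Or keep open sets but assume the entries continuous off a closed null set, as in Lemma \ref{thm:SpectraOfMultiplicationByPartialyContinousFunction}; then the sublevel sets of $x\mapsto\gamma(\pmb M(x)-\lambda)$ contain balls and both your argument and the paper's go through. The second option covers everything needed for the instability theorems, since the linearisation at a regular stationary solution has coefficients in $C(\overline\Omega)$ by Remark \ref{thm:RegSolProp}; note, however, that the proof of Theorem \ref{thm:SpectrumOfOperatorL} invokes the problematic inclusion $\sigma\big(\pmb A(\cdot)\big)\subseteq S$ for merely bounded coefficients, so the same caveat applies there.
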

\begin{proof}
	Denote by $\pmb M_{ess}$ the set on the right-hand side of equality \eqref{eq:LinearSystemSpectrumAlternativeForm} and let $\lambda \in \pmb M_{ess}$. Suppose that $\lambda \notin \sigma(\pmb M(\cdot))$. Hence, there a exists a constant $C>0$ such that
	\begin{nalign}
		\label{eq:SpectraOfLinearMessInvertable}
		\| \pmb \xi \|_p \leqslant C \| (\pmb M(\cdot ) - \lambda \pmb I) \pmb \xi \|_\Lp{} \quad \text{for all} \quad \pmb \xi \in L^p(\Omega)^n.
	\end{nalign}
	Now by the definition of $\pmb M_{ess}$, for each $\varepsilon>0$ there exist open set $\Omega_\varepsilon \subset \Omega$ and $\pmb \xi_\varepsilon$  such that $\| \pmb M(\cdot) \pmb \xi_\varepsilon - \lambda \pmb \xi_\varepsilon \|_{L^\infty(\Omega_\varepsilon)} \leqslant \varepsilon|\pmb \xi_\varepsilon|$. Let $\pmb \xi = \frac{1}{|\Omega_\varepsilon|}\mathds{1}_{\Omega_\varepsilon}\pmb \xi_\varepsilon$ in inequality \eqref{eq:SpectraOfLinearMessInvertable}. Then,
	\begin{nalign}
		\|\pmb \xi\|_{L^p(\Omega)} 
		&\leqslant C \|\big(\pmb M(\cdot) - \lambda\pmb I\big)\pmb \xi\|_\Lp{} \\
		&\leqslant C \|\big(\pmb M(\cdot) - \lambda\pmb I\big) \|_{L^\infty(\Omega_\varepsilon)}\|\pmb \xi \|_{L^p(\Omega_\varepsilon)}
		\leqslant C\varepsilon\|\pmb \xi \|_\Lp{}.
	\end{nalign}
	For sufficiently small $\varepsilon>0$, we have $\|\pmb \xi\|_\Lp{} = 0$ which is contradiction. 
	
	Conversely, let $\lambda\notin \pmb M_{ess}$. By the definition of $\pmb M_{ess}$, there exists $\varepsilon >0$ such that, for all $\Omega_\varepsilon \subset\Omega$ and for all vectors $\pmb \xi_\varepsilon \in \R^n$ with $| \pmb \xi_\varepsilon| = 1$ we have $\| \pmb M(\cdot) \pmb \xi_\varepsilon - \lambda \pmb \xi_\varepsilon \|_{L^\infty(\Omega_\varepsilon)} \geqslant \varepsilon$. It implies that the inequality $| \pmb M(x) \pmb \xi_\varepsilon - \lambda \pmb \xi_\varepsilon | \geqslant\varepsilon|\pmb \xi_\varepsilon|$ holds true for almost all $x\in \Omega$. Thus,
	\begin{nalign}
		\|\big(\pmb M(\cdot) - \lambda\pmb I\big)\pmb \xi\|_\Lp{} = \left(\int_\Omega \big|\big(\pmb M(x) - \lambda \pmb I\big)\pmb \xi(x) \big|^p \dx\right)^{\frac{1}{p}} 	\geqslant \varepsilon \|\pmb \xi\|_\Lp{}
	\end{nalign}
	and the operator $\pmb M(\cdot) - \lambda\pmb I$ is invertible, hence $\lambda \notin \sigma\big(\pmb M(\cdot)\big)$.
\end{proof}

The following lemma explains a relation between $\sigma\big(\pmb M(\cdot)\big)$ and the eigenvalues of the matrices $\pmb M(x)$ for each $x\in \Omega$.

\begin{lemma}
	\label{thm:SpectraOfMultiplicationByPartialyContinousFunction}
	Assume that $\pmb M(x) = \big( m_{i,j}(x)\big)_{i,j=1}^n$ with $m_{i,j} \in L^\infty (\Omega)$. Assume that there exists  a closed set $\Omega'\subset \Omega $ such that $|\Omega'|=0$ and $m_{i,j}\big|_{\Omega\setminus \Omega'}$ is continuous. Then
	\begin{nalign}
		\sigma\big(\pmb M(\cdot)\big) = \overline{\bigcup_{x\in\Omega\setminus \Omega'} \sigma\big(\pmb M(x)\big)}.
	\end{nalign} 
\end{lemma}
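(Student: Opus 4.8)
The plan is to reduce everything to the approximate-eigenvalue description of the spectrum furnished by Lemma \ref{thm:SpectraOfMultiplicationByMatrix} and to establish the two inclusions separately. Throughout I write $K=\max_{i,j}\|m_{i,j}\|_{L^\infty(\Omega)}$, so that $\|\pmb M(x)\|\leqslant nK$ for almost every $x\in\Omega$, and I note at the outset that $\sigma\big(\pmb M(\cdot)\big)$ is closed, being the spectrum of a bounded operator.

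For the inclusion $\supseteq$, it therefore suffices to show $\sigma\big(\pmb M(x_0)\big)\subset\sigma\big(\pmb M(\cdot)\big)$ for each $x_0\in\Omega\setminus\Omega'$, and then take the closure. Fix such an $x_0$, let $\lambda$ be an eigenvalue of $\pmb M(x_0)$ and $\pmb\xi\in\C^n$ a corresponding eigenvector with $|\pmb\xi|=1$. Since $\Omega'$ is closed, the set $\Omega\setminus\Omega'$ is open, $x_0$ is interior to it, and the entries $m_{i,j}$ are continuous there. Hence for every $\varepsilon>0$ there is an open neighbourhood $\Omega_\varepsilon\subset\Omega\setminus\Omega'$ of $x_0$ on which $\|\pmb M(x)-\pmb M(x_0)\|\leqslant\varepsilon$, so that
\[
\big\|\pmb M(\cdot)\pmb\xi-\lambda\pmb\xi\big\|_{L^\infty(\Omega_\varepsilon)}=\big\|\big(\pmb M(\cdot)-\pmb M(x_0)\big)\pmb\xi\big\|_{L^\infty(\Omega_\varepsilon)}\leqslant\varepsilon=\varepsilon|\pmb\xi|.
\]
By the characterization in Lemma \ref{thm:SpectraOfMultiplicationByMatrix} this places $\lambda$ in $\sigma\big(\pmb M(\cdot)\big)$.

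For the reverse inclusion $\subseteq$, I argue by contradiction: suppose $\lambda\in\sigma\big(\pmb M(\cdot)\big)$ but $\lambda\notin\overline{\bigcup_{x\in\Omega\setminus\Omega'}\sigma\big(\pmb M(x)\big)}$. Then there is $\delta>0$ with $\operatorname{dist}\big(\lambda,\sigma(\pmb M(x))\big)\geqslant\delta$ for all $x\in\Omega\setminus\Omega'$. The crucial step is to convert this eigenvalue gap into a \emph{uniform} bound on the resolvents. Writing $(\pmb M(x)-\lambda\pmb I)^{-1}=\det(\pmb M(x)-\lambda\pmb I)^{-1}\operatorname{adj}(\pmb M(x)-\lambda\pmb I)$, the gap yields $|\det(\pmb M(x)-\lambda\pmb I)|=\prod_j|\lambda_j(x)-\lambda|\geqslant\delta^n$, while the entries of the adjugate are polynomials of degree $n-1$ in the entries of $\pmb M(x)-\lambda\pmb I$ and are thus bounded by $C(n)(nK+|\lambda|)^{n-1}$. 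Consequently there is a constant $M_0=M_0(n,K,\delta,\lambda)$ with $\|(\pmb M(x)-\lambda\pmb I)^{-1}\|\leqslant M_0$ for almost every $x\in\Omega\setminus\Omega'$.

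I then invoke Lemma \ref{thm:SpectraOfMultiplicationByMatrix} once more. Since $\lambda\in\sigma\big(\pmb M(\cdot)\big)$, for the choice $\varepsilon=1/(2M_0)$ there are an open set $\Omega_\varepsilon\subset\Omega$ and a vector $\pmb\xi_\varepsilon\neq0$ with $\|\pmb M(\cdot)\pmb\xi_\varepsilon-\lambda\pmb\xi_\varepsilon\|_{L^\infty(\Omega_\varepsilon)}\leqslant\varepsilon|\pmb\xi_\varepsilon|$. Because $|\Omega'|=0$ and $\Omega_\varepsilon$ is open, hence of positive measure, I may select $x\in\Omega_\varepsilon\setminus\Omega'$ at which both the resolvent bound and the pointwise inequality $|\pmb M(x)\pmb\xi_\varepsilon-\lambda\pmb\xi_\varepsilon|\leqslant\varepsilon|\pmb\xi_\varepsilon|$ hold. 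Then
\[
|\pmb\xi_\varepsilon|\leqslant\big\|(\pmb M(x)-\lambda\pmb I)^{-1}\big\|\,\big|(\pmb M(x)-\lambda\pmb I)\pmb\xi_\varepsilon\big|\leqslant M_0\varepsilon|\pmb\xi_\varepsilon|=\tfrac12|\pmb\xi_\varepsilon|,
\]
a contradiction. I expect the uniform resolvent estimate to be the sole genuine difficulty: eigenvalue separation by itself does not control $\|(\pmb M(x)-\lambda\pmb I)^{-1}\|$ for non-normal matrices, so the argument must combine the determinant lower bound $\delta^n$ with the uniform $L^\infty$ bound $K$ on the entries through the adjugate formula.
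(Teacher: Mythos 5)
Your proof is correct, and its skeleton --- reduce everything to the approximate-eigenvalue characterization of Lemma \ref{thm:SpectraOfMultiplicationByMatrix} and prove the two inclusions separately --- is the same as the paper's; the genuine difference is in how the key uniformity step of the reverse inclusion is handled. For the inclusion $\supseteq$ you and the paper do essentially the same thing: the paper takes $\lambda$ in the closure, picks eigenvalues $\lambda_n$ of $\pmb M(x_n)$ converging to $\lambda$ together with eigenvectors, and uses continuity plus the triangle inequality, whereas you first invoke closedness of $\sigma\big(\pmb M(\cdot)\big)$ to reduce to an exact eigenvalue at a single point $x_0$, which is a slightly cleaner bookkeeping of the same idea. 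For the inclusion $\subseteq$, the paper derives the uniform lower bound $|(\pmb M(x)-\lambda\pmb I)\pmb\xi|\geqslant\varepsilon|\pmb\xi|$ on $\Omega\setminus\Omega'$ by a soft contradiction argument: if $\varepsilon_{x_n}\to0$ then ``$\lambda$ is arbitrarily close to an eigenvalue of $\pmb M(x_n)$''. That step is exactly the subtlety you flag at the end: for non-normal matrices, near-singularity of $\pmb M(x_n)-\lambda\pmb I$ does not by itself place $\lambda$ near $\sigma\big(\pmb M(x_n)\big)$, so the paper's sentence must be backed up either by compactness of the bounded matrix sequence combined with continuity of eigenvalues (Remark \ref{eigen:cont}) or by precisely the determinant estimate you employ. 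Your route replaces this soft argument with the explicit quantitative resolvent bound $\|(\pmb M(x)-\lambda\pmb I)^{-1}\|\leqslant C(n)(nK+|\lambda|)^{n-1}\delta^{-n}$ obtained from the adjugate formula and the lower bound $|\det(\pmb M(x)-\lambda\pmb I)|\geqslant\delta^n$; this avoids sequences altogether, makes the uniformity transparent, and the only remaining care is the (correctly handled) selection of a point of $\Omega_\varepsilon\setminus\Omega'$ where both the a.e.\ bounds hold, legitimate since $|\Omega'|=0$ while the open set $\Omega_\varepsilon$ has positive measure. One cosmetic mismatch, which your argument shares with the paper's own proof: Lemma \ref{thm:SpectraOfMultiplicationByMatrix} is stated with test vectors $\pmb\xi_\varepsilon\in\R^n$, while eigenvectors of real matrices with complex eigenvalues are genuinely complex, so the characterization should be read with $\C^n$ in both proofs.
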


\begin{proof}
	 Let $\lambda \in \overline{\bigcup_{x\in\Omega\setminus \Omega'} \sigma\big(\pmb M(x)\big)}$. Hence, there exists a sequence $\{x_n\}_{n=1}^\infty \subset \Omega\setminus \Omega'$ such that the matrices $\pmb M(x_n)$ have eigenvalues satisfying $\lambda_n \to \lambda$. Denote by $\pmb \xi_{\lambda_n}$ the corresponding eigenvector, \textit{i.e.} $(\pmb M(x) - \lambda_n)\pmb \xi_{\lambda_n}  = 0$. By the assumption, matrix elements $m_{i,j}(x)$ are continuous functions on $\Omega \setminus \Omega'$. Since eigenvalues of the matrix $\pmb M(x)$ are also continues functions of $x$
	(see Remark \ref{eigen:cont}, below), for each $\varepsilon >0$ there exists a ball $B_\delta(x_n)$ such that for all $y\in B_\delta(x_n)$
	\begin{nalign}
		|(\pmb M(y) - \lambda_n)\pmb \xi_{\lambda_n} | \leqslant \varepsilon |\pmb \xi_{\lambda_n}|.
	\end{nalign}
	We take $n$ sufficiently large and apply the triangle inequality to obtain
	\begin{nalign}
		|(\pmb M(y) - \lambda)\pmb \xi_{\lambda_n} | \leqslant |(\pmb M(y) - \lambda_n)\pmb \xi_{\lambda_n} | + |(\lambda_n - \lambda)\pmb \xi_{\lambda_n} | \leqslant 	2\varepsilon |\pmb \xi_{\lambda_n}|.
	\end{nalign}
	Thus, by Lemma \ref{thm:SpectraOfMultiplicationByMatrix}, we have $\lambda \in \sigma(\pmb M(\cdot))$.
	 
	Conversely, assume that $\lambda \notin  \overline{\bigcup_{x\in\Omega\setminus \Omega'} \sigma\big(\pmb M(x)\big)}$. Thus, for each $x\in \Omega \setminus \Omega'$, the matrix $\big( \pmb M(x) - \lambda \pmb I\big)$ is invertible and there exists  $\varepsilon_x>0$ such that for all vectors $\pmb \xi_\varepsilon \in \R^n$ we have
	\begin{nalign}
	 	\label{eq:SpectraLinearConstantFunctionInvertibilityEstimate}
	 	\big| (\pmb M(x) - \lambda \pmb I)\pmb \xi_\varepsilon\big| \geqslant \varepsilon_x |\pmb \xi_\varepsilon|.
	\end{nalign}
	We show that there exists $\varepsilon >0$ such that $\varepsilon_x \geqslant \varepsilon >0$ uniformly for all $x\in \Omega \setminus \Omega'$. Indeed, suppose \textit{a contrario} that there exists a sequence $x_n \in \Omega \setminus \Omega'$ satisfying $\varepsilon_{x_n} \to 0$. We take the sequence of vectors $\{\pmb \xi_{\varepsilon_{x_n}}\}_{n=1}^\infty \subset \R^n$, with $|\pmb \xi_{\varepsilon_{x_n}}| = 1$ satisfying 
	\begin{nalign}
		\big|(\pmb M(x_n) - \lambda \pmb I)\pmb \xi_{\varepsilon_{x_n}} \big| = \varepsilon_{x_n} \to 0. 
	\end{nalign}
	Letting $n\to \infty$ we have that $\lambda$ is arbitrary close to an eigenvalue of matrix $\pmb M(x_n)$, hence $\lambda \in  \overline{\bigcup_{x\in\Omega\setminus \Omega'} \sigma\big(\pmb M(x)\big)}$. Therefore, there exists $\varepsilon >0$ such that for all $\pmb \xi_\varepsilon \in \R^n$ we have
	\begin{nalign}
		\big| (\pmb M(x) - \lambda \pmb I)\pmb \xi_\varepsilon\big| \geqslant \varepsilon |\pmb \xi_\varepsilon|, \quad \text{for all} \quad x\in \Omega\setminus \Omega'.
	\end{nalign}
	Lemma \ref{thm:SpectraOfMultiplicationByMatrix} implies again that $\lambda \notin \sigma(\pmb M(\cdot))$.
\end{proof}

\begin{rem}\label{eigen:cont}
	Notice that eigenvalues of a matrix depend continuously on the matrix   elements, see {\it eg.} \cite{MR884486}. 
Such a continuous dependence on a matrix elements may be false  for  eigenvectors,  see {\it eg.}~\cite[Example 5.3 on p.~111]{MR1335452}.
\end{rem}

\subsection{Spectrum of the reaction-diffusion-ODE operator}%

\begin{theorem}
	\label{thm:SpectrumOfOperatorL}
	Let $\big(\L_p, D(\L_p)\big)$, with $p\in(1,\infty)$, be the operator given by formula~\eqref{eq:LinearSystemOperatorLDefinition} with matrices~\eqref{eq:LinearSystemCoefficientMatrixDefinition}-\eqref{abcd}. 
	Denote by $\sigma\big( \pmb A(\cdot) \big)$ be the spectrum of multiplication operator $\pmb A(\cdot)$ on $L^p(\Omega)^n$. Then, the spectrum of $\big(\L_p, D(\L_p)\big)$ can be characterized as
	\begin{nalign}
		\sigma(\L_p) = \sigma\big(\pmb A(\cdot)\big) \cup \lbrace \text{eigenvalues of } \; \L_p \rbrace.
	\end{nalign}
	In particular, the spectrum $\sigma(\L_p)$ is independent of $p$. 
\end{theorem}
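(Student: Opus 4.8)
The plan is to split the spectral analysis according to whether $\lambda$ belongs to $\sigma\big(\pmb A(\cdot)\big)$, writing the resolvent equation $(\L_p - \lambda)(\pmb\varphi, \psi) = (\pmb F, G)$ as the coupled pair
\[
(\pmb A - \lambda)\pmb\varphi + \pmb B\psi = \pmb F, \qquad \pmb C\pmb\varphi + (\Delta_\nu + d - \lambda)\psi = G.
\]
For $\lambda \notin \sigma\big(\pmb A(\cdot)\big)$, Lemma \ref{thm:SpectraOfMultiplicationByMatrix} shows that the pointwise inverses $(\pmb A(x) - \lambda)^{-1}$ are essentially uniformly bounded, so the multiplication operator $\pmb A(\cdot) - \lambda$ is boundedly invertible on $L^p(\Omega)^n$ and the first equation yields $\pmb\varphi = (\pmb A - \lambda)^{-1}(\pmb F - \pmb B\psi)$. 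Substituting into the second equation reduces the whole system to the scalar elliptic problem $(\Delta_\nu + q_\lambda - \lambda)\psi = G - \pmb C(\pmb A - \lambda)^{-1}\pmb F$ with the bounded Schur-complement potential $q_\lambda = d - \pmb C(\pmb A - \lambda)^{-1}\pmb B \in L^\infty(\Omega)$.

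First I would record that $\Delta_\nu + q_\lambda$ is a bounded perturbation of the Neumann Laplacian and therefore has compact resolvent; hence $\Delta_\nu + q_\lambda - \lambda$ is Fredholm of index zero and is invertible precisely when it is injective. Consequently, for $\lambda \notin \sigma\big(\pmb A(\cdot)\big)$ the operator $\L_p - \lambda$ is boundedly invertible (its inverse being reconstructed from the scalar resolvent and the formula for $\pmb\varphi$) unless that scalar operator has a nontrivial kernel, in which case any $\psi \neq 0$ with $(\Delta_\nu + q_\lambda - \lambda)\psi = 0$ together with $\pmb\varphi = -(\pmb A - \lambda)^{-1}\pmb B\psi$ produces an eigenfunction of $\L_p$. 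This establishes the inclusion $\sigma(\L_p) \subseteq \sigma\big(\pmb A(\cdot)\big) \cup \{\text{eigenvalues}\}$ and shows that outside $\sigma\big(\pmb A(\cdot)\big)$ every spectral point is an eigenvalue.

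It remains to prove $\sigma\big(\pmb A(\cdot)\big) \subseteq \sigma(\L_p)$, which I expect to be the main obstacle because of the coupling term $\pmb C\pmb\varphi$. Given $\lambda \in \sigma\big(\pmb A(\cdot)\big)$, Lemma \ref{thm:SpectraOfMultiplicationByMatrix} supplies, for each $\varepsilon$, a vector $\pmb\xi_\varepsilon$ with $|\pmb\xi_\varepsilon| = 1$ and an open set on which $|\pmb A(x)\pmb\xi_\varepsilon - \lambda\pmb\xi_\varepsilon| \leq \varepsilon$; shrinking this set to a small ball $B_\varepsilon$ I would set $\pmb\varphi_\varepsilon = |B_\varepsilon|^{-1/p}\mathds{1}_{B_\varepsilon}\pmb\xi_\varepsilon$, so that $\|\pmb\varphi_\varepsilon\|_p = 1$, $\|(\pmb A - \lambda)\pmb\varphi_\varepsilon\|_p \leq \varepsilon$, and—crucially, since $p > 1$—$\pmb\varphi_\varepsilon \rightharpoonup 0$ weakly in $L^p(\Omega)^n$. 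The idea is to cancel the spurious second component $\pmb C\pmb\varphi_\varepsilon$ by choosing $\psi_\varepsilon$ to solve $(\Delta_\nu + d - \lambda)\psi_\varepsilon = -\pmb C\pmb\varphi_\varepsilon$: because the resolvent of $\Delta_\nu + d$ is compact and $\pmb C\pmb\varphi_\varepsilon \rightharpoonup 0$, the solutions satisfy $\psi_\varepsilon \to 0$ strongly in $L^p$, whence $\pmb B\psi_\varepsilon \to 0$ and $(\L_p - \lambda)(\pmb\varphi_\varepsilon, \psi_\varepsilon) \to 0$ while $\|(\pmb\varphi_\varepsilon, \psi_\varepsilon)\| \geq 1$; thus $\lambda$ lies in the approximate point spectrum. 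The delicate sub-case is $\lambda \in \sigma(\Delta_\nu + d)$, where the equation for $\psi_\varepsilon$ need not be solvable. Here I would solve it only modulo the finite-dimensional kernel of $\Delta_\nu + d - \lambda$; the omitted kernel part has size controlled by $|\langle \pmb C\pmb\varphi_\varepsilon, \phi^*\rangle|$ for the finitely many adjoint eigenfunctions $\phi^*$, which tends to $0$ by the same weak convergence, so the second component still tends to $0$ and the reduced compact solution operator again forces $\psi_\varepsilon \to 0$. Alternatively, since $\sigma(\Delta_\nu + d)$ is discrete, the non-isolated bad points follow from the good ones by closedness of $\sigma(\L_p)$.

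Finally, for the $p$-independence I would observe that the characterization of $\sigma\big(\pmb A(\cdot)\big)$ in Lemma \ref{thm:SpectraOfMultiplicationByMatrix} is phrased entirely through $L^\infty$-estimates on subdomains and is manifestly the same for every $p$, while any eigenvalue $\lambda \notin \sigma\big(\pmb A(\cdot)\big)$ has an eigenfunction whose $\psi$-component solves a scalar elliptic equation with $L^\infty$ coefficients; a standard bootstrap using elliptic $L^p$-regularity and Sobolev embeddings places $\psi$ in $W^{2,q}_\nu(\Omega)$ for every $q$, and $\pmb\varphi = -(\pmb A - \lambda)^{-1}\pmb B\psi \in L^\infty(\Omega)^n$, so the eigenvalue set does not depend on $p$ either.
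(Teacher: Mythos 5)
Your proposal is correct, and its key step is genuinely different from the paper's. For the inclusion $\sigma\big(\pmb A(\cdot)\big)\subseteq\sigma(\L_p)$, the paper argues by contradiction from the a priori resolvent bound \eqref{eq:InstabilityPsiPhiEstimate}: it keeps $\psi\in C_c^\infty(B_\varepsilon)$ free and manufactures a companion $\pmb\varphi=\rho\,\pmb\xi$ (dividing by the cut-off quantity $\pmb C^\varepsilon\pmb\xi$ to keep the denominator away from zero) so that the second component of $(\L_p-\lambda)\begin{spm}\pmb\varphi\\ \psi\end{spm}$ is small; inserting this pair into the estimate yields $\|\psi\|_{W^{2,p}(\Omega)}\leqslant K\|\pmb B\|_\Li\|\psi\|_\Lp{}$ for all such $\psi$, which is absurd. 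You swap the roles: $\pmb\varphi_\varepsilon$ is the normalized indicator supplied by Lemma \ref{thm:SpectraOfMultiplicationByMatrix}, and $\psi_\varepsilon$ is \emph{solved} from the second equation, with smallness of $\psi_\varepsilon$ coming from compactness of the scalar resolvent combined with the weak convergence $\pmb\varphi_\varepsilon\rightharpoonup 0$ (valid because $p>1$ and because the balls can be taken with $|B_\varepsilon|\to 0$). What each route buys: yours avoids the cut-off trick entirely, but pays for it in the resonant sub-case $\lambda\in\sigma(\Delta_\nu+d)$, which forces the Fredholm-modulo-kernel repair; the paper's construction never inverts the scalar operator, so it needs no case distinction at all. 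Your repair is sound --- the pairings $\langle\pmb C\pmb\varphi_\varepsilon,\phi^*\rangle$ with the finitely many adjoint eigenfunctions do vanish in the limit, and the restricted solution operator is still compact --- but note that your ``alternative'' fix via closedness of $\sigma(\L_p)$ is not a complete substitute: a bad point may be isolated in $\sigma\big(\pmb A(\cdot)\big)$ (for instance when $\pmb A$ is constant, so that $\sigma\big(\pmb A(\cdot)\big)$ is a finite set), and then it is not a limit of good points, so the Fredholm repair is really needed. For the complementary inclusion, the paper applies the Analytic Fredholm Theorem of Steinberg \cite{MR233240} to the analytic family $\lambda\mapsto\Delta_\nu+q(\lambda)-\lambda$, which yields the extra information that the eigenvalues outside $\sigma\big(\pmb A(\cdot)\big)$ form a countable set; your pointwise Fredholm alternative (compact resolvent, index zero, invertible iff injective) is more elementary and fully suffices for the stated decomposition, losing only this discreteness. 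The $p$-independence bootstrap via elliptic regularity is essentially identical in both arguments.
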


\begin{proof}
	We improve the reasoning form \cite[Thm 4.5]{MR3600397}, where one ODE coupled with one PDE were considered.
	First, we prove that $\sigma\big(\pmb A(\cdot) \big) \subset \sigma(\L_p)$ by showing that for each $\lambda \in \sigma\big(\pmb A(\cdot) \big)$ the operator 
	\begin{nalign}
		\L_p -\lambda \pmb I: L^p(\Omega)^n\times \W2p \rightarrow L^p(\Omega)^n\times L^p(\Omega)
	\end{nalign}
	defined by formula
	\begin{nalign}
		(\L_p - \lambda \pmb I)\begin{pmatrix}\pmb \varphi\\\psi\end{pmatrix} = \begin{pmatrix} (\pmb A - \lambda)\pmb \varphi + \pmb B \psi \\ \Delta_\nu \psi + \pmb C \pmb \varphi + (d - \lambda) \psi
		\end{pmatrix}
	\end{nalign}
	cannot have a bounded inverse. Indeed, suppose {\it a contrario} that $(\L_p - \lambda \pmb I)^{-1}$ exists { and is a bounded operator from $(L^p(\Omega))^{n+1}$ into $D(\L_p)$ equipped with the graph norm. The graph norm of $\L_p$ is equivalent to the norm on the product space $(L^p(\Omega))^n \times (\W2p)$ by \cite[Section 2.4]{MR2573296}}.~Thus, there exists a number  $K >0$ such that
	\begin{nalign}
		\label{eq:InstabilityPsiPhiEstimate}
		\|\pmb \varphi \|_\Lp{} + \|\psi\|_{W^{2,p}(\Omega)} \leqslant K \big( &\|  (\pmb A  - \lambda \pmb I) \pmb \varphi + \pmb B \psi \|_\Lp{} \\  +& \| \Delta_\nu \psi +  \pmb C   \pmb \varphi + (d - \lambda) \psi \|_\Lp{} \big)
	\end{nalign}
	 for all $(\pmb \varphi, \psi) \in L^p(\Omega)^{n}\times \W2p$. Let $\varepsilon>0$ be arbitrary (to be chosen later on). By Lemma  \ref{thm:SpectraOfMultiplicationByMatrix}  there exists $\pmb \xi \in \R^n$ with $|\pmb \xi| = 1$ and a ball $B_\varepsilon$ such that
	 \begin{nalign}
	 	\|(\pmb A(\cdot) - \lambda \pmb I)\pmb\xi \|_{L^\infty(B_\varepsilon)}\leqslant\varepsilon.
	 \end{nalign}
	 Let us show that, for arbitrary $\psi \in C_c^\infty(\Omega)$  with  $\text{supp} \;\psi \subset B_\varepsilon $, we may find $\pmb \varphi \in L^p(\Omega)^n$ with the following properties
	 \begin{itemize}
	 	\item $\pmb \varphi =  \rho \pmb \xi$ for some $\rho\in \Lp{}$ with   $\text{supp}\, \rho \subset B_\varepsilon $, 
	 	\item $\zeta = \Delta_\nu \psi + \pmb C \pmb \varphi + (d- \lambda)\psi $ satisfies $\| \zeta\|_\Lp{} \leqslant \varepsilon \| \pmb\varphi \|_\Lp{}$.
	 \end{itemize}
	Indeed, we cut the function $\pmb C$ at the level $\varepsilon$ in the following way
	\begin{nalign}
		\pmb C^\varepsilon = \pmb C^\varepsilon (x) \equiv 
		\begin{cases}
			 \pmb C(x) & \text{if} \quad  \big|\pmb C(x)\pmb \xi \big| > \varepsilon, \\
			(\varepsilon,\cdots, \varepsilon)  & \text{if} \quad  \big|\pmb C(x)\pmb \xi \big| \leqslant \varepsilon.
		\end{cases}
	\end{nalign}
	Thus we obtain 
	\begin{nalign}
		\zeta = \Delta_\nu \psi + \pmb C \pmb \varphi + (d - \lambda) \psi = \Delta_\nu \psi + \pmb C^\varepsilon \pmb \varphi + (d- \lambda) \psi + (\pmb C - \pmb C^\varepsilon) 	\pmb \varphi 
	\end{nalign}
	for
	\begin{nalign}
		\pmb \varphi =\frac{-(\Delta_\nu \psi +(d - \lambda) \psi)}{\pmb C^\varepsilon \pmb \xi}\pmb \xi = \rho \pmb \xi \in L^p(\Omega)^n \quad \text{and} \quad \zeta = (\pmb 	C - \pmb C^\varepsilon)\pmb \varphi \in L^p(\Omega)
	\end{nalign}
	with $\| (\pmb C - \pmb C^\varepsilon ) \pmb \xi \|_{L^\infty(\Omega)} \leqslant n\varepsilon$.
	
	Now, noting that supp $\pmb \varphi \subset B_\varepsilon$ and $\pmb \varphi(x) = \rho(x) \pmb \xi$, we obtain the inequality
	\begin{nalign}
		\| (\pmb A - \lambda \pmb I) \pmb \varphi  \|_\Lp{} \leqslant \| (\pmb A - \lambda \pmb I) \pmb \xi \|_{L^\infty (B_\varepsilon)} \|\rho\|_\Lp{} \leqslant \varepsilon \|\pmb 	\varphi\|_{\Lp{}}.
	\end{nalign}
	Thus, substituting the functions $\pmb \varphi, \psi, \pmb\xi$ into inequality \eqref{eq:InstabilityPsiPhiEstimate}, we obtain the estimate 
	\begin{nalign}
		\label{eq:LinearSystemOperatorSpectraPsiEstimate}
		\| \pmb \varphi \|_{\Lp{}} &+ \| \psi \|_{W^{2,p}(\Omega)} \\ 
		&\leqslant K \big( \| ( \pmb A - \lambda \pmb I) \pmb \varphi \|_{\Lp{}} + \| \pmb B \psi \|_{\Lp{}} +  \| \zeta \|_\Lp{} \big) \\
		&\leqslant K\big( (n+1) \varepsilon   \|\pmb \varphi \|_{\Lp{}} + \| \pmb B \|_\Li \| \psi \|_\Lp{} \big).
	\end{nalign}
	Hence, choosing $\varepsilon >0$ small enough we compensate the term $K\varepsilon(n+1) \| \pmb \varphi \|_\Lp{}$ by its counterpart on the left-hand side of inequality \eqref{eq:LinearSystemOperatorSpectraPsiEstimate} and obtain estimates
	\begin{nalign}
		\| \psi \|_{W^{2,p}(\Omega)} \leqslant \big(1 - K(n+1) \varepsilon\big) \|\pmb \varphi \|_\Lp{} + \| \psi \|_\W2p \leqslant K \| \pmb B \|_\Li \| \psi \|_\Lp{}
	\end{nalign}
	which obviously can not hold for all $\psi \in C_c^\infty(\Omega)$ such that supp $\psi \subset B_\varepsilon$. This completes the proof that $\sigma\big(\pmb A(\cdot)\big) \subset \sigma(\L_p)$. Moreover, by Lemma \ref{thm:SpectraOfMultiplicationByPartialyContinousFunction}, the spectrum $\sigma\big(\pmb A (\cdot) \big)$ is independent of $p$. 
	
	Next, we prove that the remainder of the spectrum $\big(\mathcal{L}_p, D(\mathcal{L}_p)\big)$ consists of a discrete set of eigenvalues by analysing the corresponding resolvent equations
	\begin{nalign}
		\label{eq:LinearSystemDiscreteSpectrumOfL}
		(\pmb A - \lambda \pmb I) \pmb \varphi + \pmb B \psi &= \pmb F, \quad  x\in \overline{\Omega}, \\
		 \Delta_\nu \psi + \pmb C \pmb \varphi + (d-\lambda)\psi &= G, \quad  x\in \Omega
	\end{nalign}
	with arbitrary $(\pmb F, G)\in L^p(\Omega)^n\times L^p(\Omega)$. Since, $\lambda \notin \sigma(\pmb A(\cdot))$ we can solve the first equation with respect to $\pmb \varphi$ and substitute into the second equation to obtain the boundary value problem
	\begin{nalign}
		\label{eq:LinearSystemSpectrumLInvertibility}
		\Delta_\nu \psi + q(\lambda) \psi = r(\lambda)
	\end{nalign}
	where
	\begin{nalign}
		\label{eq:LinearSystemSpectrumLPQFunctions}
		q(\lambda) &= q(x,\lambda) = -\pmb C(x) \big( \pmb A(x) -\lambda\pmb I \big)^{-1}\pmb B(x) + d(x) - \lambda, \\
		r(\lambda) &= r(x,\lambda) = -\pmb C(x) \big( \pmb A(x) -\lambda\pmb I \big)^{-1} \pmb F(x) + G(x). 
	\end{nalign}
	By the Analytic Fredholm Theorem for Banach spaces from \cite{MR233240}, used in the same way as in \cite[proof of Thm~4.5]{MR3600397} we conclude that
	the boundary value problem \eqref{eq:LinearSystemSpectrumLInvertibility}
	has no unique solution only for a countable set of $\lambda\in\C$.
	Eigenfunctions corresponding to those eigenvalues satisfy elliptic equation \eqref{eq:LinearSystemSpectrumLInvertibility}, hence, by standard elliptic regularity, they belong to $\W2p$ for each $p\in(1,\infty)$. This means that the set of eigenvalues of $\L_p$ is independent of $p$.
\end{proof}

\begin{rem}
	\label{thm:SigmaW1p}
	By Theorem \ref{thm:SpectrumOfOperatorL}, the spectrum of the operator $\big(\L_\infty, D(\L_\infty)\big)$ given by formula \eqref{eq:LinearSystemOperatorLInfDefinition} with the matrix coefficients 
	\begin{nalign}
		\label{eq:abcdW1p}
		a_{ij}, \; b_i, \; c_i, \; d \in C(\overline\Omega), \quad \text{for} \; i,j \in \lbrace 1, \cdots, n \rbrace
	\end{nalign}
	satisfies	
	\begin{nalign}
		\sigma\left( \L_p\right)=\sigma\big(\pmb A(\cdot)\big) \cup \lbrace \text{eigenvalues of } \; \L_{p} \rbrace \subset \sigma(\L_\infty).
	\end{nalign}
	Indeed, by the elliptic regularity, eigenfunctions of $-\Delta_\nu$ belong to $\W2p$ for each $p\in(1,\infty)$, thus they are continuous up to the boundary.
	Using analogous  estimates  as those in the proof of Theorem \ref{thm:SpectrumOfOperatorL} with $L^p$-norm replaced by the $L^\infty$-norm on $C(\overline\Omega)$, we obtain that  
$	
	\sigma\big(\pmb A(\cdot)\big)  \subset \sigma(\L_\infty).
$
\end{rem}

\section{Instability of regular steady states} 
\label{sec:InstRes}

We are in a position to prove the main results of this work. 

\begin{proof}[Proof of Theorem \ref{thm:Aposit}]
	Let $\UV$ be a regular stationary solution (either constant or non-constant). We consider the operator $\big(\L_p, D(\L_p) \big)$ given by formula \eqref{eq:LinearSystemOperatorLDefinition} and the matrices 
	\begin{nalign}
		\label{eq:MatrixDefInstability}
		\pmb A(x) &= D_{\pmb u} \pmb f ({\pmb U(x)},{ V(x)}), &  \pmb B(x) &= D_{v} \pmb f ({\pmb U(x)},{ V(x)}), \\ 
		\pmb C(x) &= D_{\pmb u} g ({\pmb U(x)},{ V(x)}), 	  &	      d(x) &= D_{v}  g ({\pmb U(x)},{ V(x)}).
	\end{nalign} 
	Under the assumption $s\big(D_{\pmb u} \pmb f ({\pmb U(x)},{ V(x)})\big) >0$ we obtain that $s(\L_p)>0$ by Theorem~\ref{thm:SpectrumOfOperatorL}. Since $\UV$ is a regular solution, the matrix coefficients belong to the space $C(\overline\Omega)$ (see Remark \ref{thm:RegSolProp}). Consequently, Remark \ref{thm:SigmaW1p} provides $s(\L_{\infty})>0$   which implies that $\UV$ is linearly unstable in $C(\overline\Omega)$. The nonlinear instability in the Lyapunov sense of this solution is an immediate consequence of Theorem \ref{thm:NonLinInst} from Appendix.
\end{proof}

\begin{proof}[Proof of Theorem \ref{thm:non-const}] 
	Now, we consider non-constant regular solution $\UVx$ such that $s\big( f_{\pmb u} (\pmb U(\cdot), \, V(\cdot) )\big)\leqslant 0$ and $		\det \pmb f_{\pmb u} \big(\pmb U(x), \, V(x) \big) \neq 0$ for all $x\in \overline{\Omega}$. Moreover, we assume that $\Omega$ is convex.	Here, we are inspired by methods from the papers \cite{MR480282, MR555661} where instability of non-constant solutions to one reaction-diffusion equation was shown. We consider the operator $\big(\L_2, D(\L_2)\big)$ given by formula \eqref{eq:LinearSystemOperatorLDefinition} with the matrices \eqref{eq:MatrixDefInstability} and we study the eigenvalue problem
	\begin{nalign}
		\label{t1}
		\pmb A \pmb \varphi + \pmb B \psi &= \lambda \pmb \varphi, \\
		\Delta_\nu \psi + \pmb C \pmb \varphi + d\psi &= \lambda \psi.
	\end{nalign}
	
It follows from assumptions 
\eqref{eq:sLeZero} and \eqref{eq:ConZeroDet}
combined with Lemma \ref{thm:SpectraOfMultiplicationByPartialyContinousFunction} that 
 $\det \big(\pmb A(x) - \lambda \pmb I\big) \neq 0$
 for all $\lambda \geqslant 0$ and all $x\in \overline{\Omega}$,
 hence, 
 the matrix $(\pmb A(x) - \lambda \pmb I)$ is invertible. Thus, we substitute $\pmb \varphi$ computed from the first equation in system \eqref{t1} into the second equation which takes the following form 
	\begin{nalign}
		\Delta_\nu \psi + \Big(-\pmb C  (\pmb A - \lambda \pmb I)^{-1} \pmb B + d\Big)\psi = \lambda \psi .
	\end{nalign}
	For all $\lambda \geqslant 0 $ we define the family of the operators 
	\begin{nalign}
		G(\lambda) &: \psi \mapsto 	\Delta_\nu \psi + \Big(-\pmb C  (\pmb A - \lambda \pmb I)^{-1} \pmb B + d\Big)\psi
	\end{nalign}
	with the domain $D\big(G(\lambda) \big)= \W22$. We study their eigenvalues, namely, the numbers $\eta  \in \C$ for which the equation 
	\begin{nalign}
		G(\lambda) \psi = \eta \psi
	\end{nalign}
	has a nonzero solution and we are going to show that $\eta = \lambda$ for some $\lambda >0$. This will give a positive eigenvalue of system~\eqref{t1} because, under our assumption, system~\eqref{t1} is equivalent to the equation $G(\lambda) \psi = \lambda \psi$. 
	
	First we show that
	\begin{nalign}
		\label{t2}
		\sup_{\substack{x\in \overline{\Omega} \\ \lambda \geqslant 0}} \left| -\pmb C(x)  (\pmb A(x) - \lambda \pmb I)^{-1} \pmb B(x) + d(x) \right|  < \infty.
	\end{nalign}
	Let $\lambda \geqslant 0$. By assumption \eqref{eq:ConZeroDet}, the continuity of the determinant, and regularity of solutions, there exists $\overline{\varepsilon} >0$ such that every $\rho \in \sigma \big(\pmb A (\cdot) -\lambda \pmb I \big)$ satisfies $|\rho| \geqslant \overline{\varepsilon}$. Consequently, $|\det (\pmb A(x)  - \lambda \pmb I)| \geqslant \left(\max \lbrace \overline{\varepsilon}, \lambda \rbrace\right)^n$ for every $x\in \overline{\Omega}$ and hence
	\begin{nalign}
		\|(\pmb A(x) - \lambda \pmb I)^{-1}\| &=  \frac{1}{|\det (\pmb A(x) - \lambda \pmb I)|}\left\| {\text{adj}(\pmb A(x) - \lambda\pmb I)} \right\| \\ &\leqslant \frac{{\|\text{adj}(\pmb A(\cdot) - \lambda\pmb I)\|_\infty}}{\left(\max\lbrace \overline{\varepsilon}, \lambda \rbrace\right)^n} \leqslant \frac{C(\lambda^n+1)}{\left(\max\lbrace \overline{\varepsilon}, \lambda \rbrace\right)^n} \leqslant C.
	\end{nalign}
		
	Now, denote by $\eta_0(\lambda)$ the largest eigenvalue of problem $G(\lambda) \psi = \eta_0(\lambda) \psi.$
	The operator $\big(G, D(G)\big)$ is self-adjoint, hence, the eigenvalue $\eta_0(\lambda)$ is given by Rayleigh quotient
	\begin{nalign}
		\label{Gsup}
		\eta_0(\lambda) &= \sup_{\substack{ \psi \in \W12\\ \|\psi\|_\Lp2 = 1}} \big\langle G(\lambda) \psi, \psi  \big \rangle \\ &= \sup_{\substack{ \psi \in \W12\\ \|\psi\|_\Lp2 = 1}} \left(- \int_\Omega |\nabla \psi|^2\dx + \int_\Omega \Big(-\pmb C  (\pmb A - \lambda \pmb I)^{-1} \pmb B + d\Big) \psi^2  \dx \right).
	\end{nalign} 
	By inequality \eqref{t2}, we immediately obtain that $\sup_{\lambda\geqslant 0 }\eta_0(\lambda) <\infty$. In the next step of this proof, we show  that $\eta_0(0)>0$. 
	
	In  the one dimensional case $\Omega=(a,b)\subset\R$, differentiating equations \eqref{seq1} we obtain
	\begin{nalign}
		\pmb A \pmb U_x+\pmb B V_x=0 \quad \text{and}\quad (V_x)_{xx} + \pmb C \pmb U_x+dV_x=0
	\end{nalign}
	which yields
	\begin{nalign}
		\label{e-lin-1}
		(V_x)_{xx} + (-\pmb C  \pmb A^{-1} \pmb B + d) V_x  = 0.
	\end{nalign}
	Choosing $\psi = V_x/\|V_x\|_\Lp{2}$ in the functional in the right hand side of equation \eqref{Gsup}, integrating by parts, and using relations \eqref{e-lin-1} we obtain the inequality 
	\begin{nalign}\label{Lpp}
		\eta_0(0) \geqslant \frac{1}{\|V_x\|_\Lp{2}} \int_\Omega - V_{xx}V_{xx} 
		+ (-\pmb C \pmb A ^{-1} \pmb B + d)V_x^2 \dx=0.
	\end{nalign}
	The hypothesis $\eta_0(0)=0$ implies that $V_x$ is a maximizer of the functional in the right hand side of equation \eqref{Gsup} with $\lambda = 0$, hence, $V_x$ satisfies equation \eqref{e-lin-1} with Neumann boundary conditions, namely, $V_{xx}(x) = 0$ for $x\in \{a,b\}=\partial \Omega$. This ordinary differential equation for $V_x$ supplemented with the boundary conditions 
	\begin{nalign} 
		V_x(x)=0 \quad \text{and} \quad V_{xx}(x) = 0 \quad  \text{for} 
		\quad x\in \{a,b\}=\partial \Omega
	\end{nalign}
	has the unique solution $V_x\equiv 0$. Hence, $V$ (and so $U$) is a constant function which is a contradiction. Thus, $\eta_0(0) > 0 $. 
	
	The proof in higher dimensions is analogous. Differentiating equations \eqref{seq1} with respect to $x_j$ for $j\in \{1,...,n\}$, we obtain
	\begin{nalign}
		\label{e-lin-1n}
		\pmb A \pmb U_{x_j}+\pmb B V_{x_j}=0 \quad \text{and}\quad \Delta V_{x_j} + \pmb C \pmb U_{x_j}+dV_{x_j}=0.
	\end{nalign}
	Thus, by direct calculations, analogous to those in  \eqref{Lpp} and by the divergence theorem, we obtain
	\begin{nalign}\label{sum:L:1}
		\sum_{j=1}^N \langle G(0) V_{x_j}, V_{x_j} \rangle = -\frac12 \int_{\partial \Omega} \partial_\nu |\nabla V|^2 \,d\sigma.
	\end{nalign}
	Now, we use the use the inequality $\partial_\nu |\nabla V|^2\leqslant 0$ on $\partial \Omega $ which is valid for every $V\in C^2(\overline \Omega)$ and for every  convex $\Omega\subset \R^n$. Detailed calculations leading to this inequality can be found either in \cite[p. 269]{MR480282} or in \cite[Lemma 5.3]{MR555661}. Thus, at least one term on the left hand side of equality \eqref{sum:L:1}, is nonnegative, which implies 
	that   $\eta_0(0)\geqslant 0$ in the variational formula \eqref{Gsup}. If $\eta_0(0)=0$, by a variational argument analogously as in the one dimensional case, equations 
	\eqref{e-lin-1n} have to be satisfied by the function $V_{x_{j_0}}$ together with the Neumann boundary conditions for $V_{x_{j_0}}$. Applying the Hopf maximum principle (see e.g. \cite[Theorems  5 and 7 in Sec.~3]{MR762825}), we obtain immediately that $V_{x_{j_0}}\equiv 0$ which is a contradiction because we deal with the non-constant solution $V$. Hence, we have got $\eta_0(0)>0$. 
	
	Therefore, since $\eta_0(\lambda)$ is a continuous and bounded function on $[0,\infty)$  satisfying $\eta_0(0) > 0$, there exists $\overline{\lambda}>0$ such that  $\eta_0(\overline \lambda) = \overline{\lambda}$.  The number $\overline{\lambda}$ is a positive eigenvalue of system \eqref{t1} and hence $s(\L_2)>0$. Since $\UV$ is a regular solution, we conclude by Remark \ref{thm:SigmaW1p} that the number $\overline{\lambda}$ is also an eigenvalue of $\L_\infty$. Since $s(\L_\infty) > 0$, the nonlinear instability follows immediately from Theorem \ref{thm:NonLinInst} in Appendix.
\end{proof}

\appendix 

\section{Linearisation principle}

In order to show that a linear instability of regular stationary solutions of problem~\eqref{eq1}-\eqref{ini} implies their nonlinear instability in the Lyapunov sense, we recall the reasoning from \cite[Thm 2.1]{MR3600397}. That approach concerns general evolution equation
\begin{equation}\label{eq:a}
	w_t=\L w +\N(w), \qquad w(0)=w_0,
\end{equation}
where $\L$ is the generator of a $C_0$-semigroup of 
linear operators  $\{e^{t\L}\}_{t\geq 0}$  on a Banach space $X$
and $\N$ is  a nonlinear operator such that $ {\N(w)} = o\left(\|{w}\|\right)$ as $w\to 0$. Here, we recall one of the possible instability results, where the existence of a positive part of  the spectrum of the linear operator $\L$ is sufficient to show  that the zero solution of equation \eqref{eq:a}
is unstable. 

\begin{theorem}[{\cite[Thm 1]{MR1752509}}]\label{thm:SS}
	Consider an abstract problem \eqref{eq:a}, where
	\begin{enumerate}
		\item the linear operator $\L$ generates a strongly continuous semigroup of linear operators on a Banach space $X$,
		\item the intersection of the spectrum of $\L$ with the right half-plane $\{\lambda\in \C:\; \Re \lambda>0\}$ is nonempty.   
		\item $\N :X\to X$ is continuous and there exist constants $\rho>0$, $\eta>0$, and $C>0$ 
		such that $\|\N (w)\|_X \leq C\|w\|_X^{1+\eta}$ for all $\|w\|_X<\rho$. \label{Con3}
	\end{enumerate} 
	Then the zero solution of this equation is (nonlinearly) unstable.
\end{theorem}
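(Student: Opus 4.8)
The plan is to prove the statement constructively: for a fixed escape radius $\epsilon_0$ and for arbitrarily small initial data I would exhibit an orbit that leaves the ball of radius $\epsilon_0$. First I would pass from the continuous problem to the discrete dynamical system generated by the time-$t_0$ map. Fix $t_0>0$ and set $T=e^{t_0\L}$. Local mild solutions of \eqref{eq:a} exist (the semigroup is strongly continuous and $\N$ is continuous with the superlinear bound), and by autonomy the corresponding solution operator satisfies $\Phi^n(w_0)=w(nt_0)$, where $\Phi(w_0)=Tw_0+\widetilde{\N}(w_0)$ with $\widetilde{\N}(w_0)=\int_0^{t_0}e^{(t_0-s)\L}\N(w(s))\,ds$. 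A short-time a priori bound $\|w(s)\|\le C\|w_0\|$ on $[0,t_0]$ together with condition (3) gives $\|\widetilde{\N}(w_0)\|\le C'\|w_0\|^{1+\eta}$ for small $w_0$, so $\Phi$ inherits the hypotheses. Since sampling a continuous orbit cannot increase its supremum, instability of the zero fixed point of $\Phi$ implies instability for \eqref{eq:a}, and it suffices to treat the discrete system. The spectral hypothesis enters here: by the spectral inclusion $e^{t_0\sigma(\L)}\subseteq\sigma(T)$, any $\lambda_0\in\sigma(\L)$ with $\Re\lambda_0>0$ yields $e^{t_0\lambda_0}\in\sigma(T)$, whence the spectral radius satisfies $\rho:=r(T)\ge e^{t_0\Re\lambda_0}>1$.

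The core is a growing-mode construction for $T$. Let $z_*\in\sigma(T)$ with $|z_*|=\rho$; being an outermost point of the spectrum, $z_*$ lies on $\partial\sigma(T)$ and is therefore in the approximate point spectrum, so there are unit vectors $\phi$ with $\|(T-z_*)\phi\|$ arbitrarily small. I would fix $\epsilon_0$ small (chosen below) and, given $\delta\in(0,\epsilon_0)$, let $n$ be the first integer with $\delta\rho^n\ge 2\epsilon_0$, so $n\sim\log(\epsilon_0/\delta)/\log\rho$. Choosing $\phi=\phi_n$ with $\|(T-z_*)\phi_n\|$ small enough (an accuracy depending on the now-fixed $n$), a telescoping estimate for $T^j\phi_n-z_*^{\,j}\phi_n$ gives the two-sided control
\begin{nalign}
	\tfrac12\rho^{\,j}\le \|T^j\phi_n\|\le 2\rho^{\,j}\qquad\text{for all }1\le j\le n.
\end{nalign}
Starting from $u_0=\delta\phi_n$, I would compare the nonlinear orbit $u_{j+1}=Tu_j+\widetilde{\N}(u_j)$ with its linear part via $u_j=\delta T^j\phi_n+\sum_{i=0}^{j-1}T^{\,j-1-i}\widetilde{\N}(u_i)$, running a bootstrap $\|u_j\|\le 4\delta\rho^{\,j}$ for $j\le n$. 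The superlinear bound then forces the accumulated error to be a small fraction of the main term, yielding $\|u_n\|\ge\tfrac12\delta\rho^n\ge\epsilon_0$, so the orbit escapes.

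The hard part is that $\sigma(T)$ need not have a gap below its outer radius, so there is no Riesz projection onto a finite-dimensional unstable subspace and the spectral mapping theorem may fail; this is exactly what rules out the easy analytic-semigroup argument. The only robust bound on the powers is $\|T^m\|\le K_{\lambda'}(\lambda')^m$ for each $\lambda'>\rho$, and naively this makes the error term in the Duhamel sum grow like $(\lambda')^{m}$, faster than the main term's rate $\rho$. The resolution, and the crux of the estimate, is to choose $\lambda'$ in the nonempty interval $(\rho,\rho^{1+\eta})$: then the geometric series $\sum_{i=0}^{n-1}(\lambda')^{\,n-1-i}\big(\delta\rho^{\,i}\big)^{1+\eta}$ has ratio $q=\rho^{1+\eta}/\lambda'>1$, is dominated by its last term, and the factor $(\lambda')^{n}$ cancels against $q^{\,n}$. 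The error thus grows at the exact rate $\rho^{1+\eta}$, independently of the gap, so that the ratio of error to main term is $\lesssim (\delta\rho^n)^\eta\le\epsilon_0^{\,\eta}$ uniformly along the trajectory up to the escape time. Fixing $\epsilon_0$ so small that this ratio stays below $\tfrac12$ closes the bootstrap and yields the escape estimate for every $\delta>0$. Note that neither convexity of $\Omega$ nor analyticity of the semigroup is used: the argument needs only a spectral point at the spectral radius (automatically in the approximate point spectrum) together with the superlinear bound in condition (3).
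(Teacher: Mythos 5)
The paper itself contains no proof of this statement: it is quoted from Shatah--Strauss \cite{MR1752509} as a black box (and then applied in Theorem \ref{thm:NonLinInst}). Your argument is, in substance, a faithful reconstruction of the proof in the cited source: pass to the time-$t_0$ map $T=e^{t_0\L}$, use the spectral inclusion $e^{t_0\sigma(\L)}\subseteq \sigma(T)$ (which survives the failure of the spectral mapping theorem for $C_0$-semigroups) to get $r(T)>1$, take an approximate eigenvector at a spectral point of maximal modulus --- a boundary point of $\sigma(T)$, hence in the approximate point spectrum --- and close the bootstrap by choosing the power-bound rate $\lambda'$ in the nonempty interval $\big(r(T),\,r(T)^{1+\eta}\big)$, so that the Duhamel error sum is a geometric series with ratio $q=r(T)^{1+\eta}/\lambda'>1$ dominated by its last term, making the error an $O(\epsilon_0^\eta)$-fraction of the main term uniformly up to the escape time $n\sim \log(\epsilon_0/\delta)/\log r(T)$. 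This is precisely the point of the Shatah--Strauss paper (no spectral gap, no Riesz projection, no analyticity), and your identification of the $\lambda'\in(r(T),r(T)^{1+\eta})$ choice as the crux is accurate; the telescoping two-sided bound $\tfrac12 r(T)^j\le\|T^j\phi_n\|\le 2r(T)^j$ with accuracy chosen after $n$ is fixed is also correct.

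Two caveats, one of which is a genuine misstatement. First, local existence of mild solutions does \emph{not} follow from continuity of $\N$ plus the superlinear bound: Peano's theorem fails in infinite-dimensional Banach spaces, so your parenthetical justification for the existence of the flow (and hence of the map $\Phi$) is wrong as stated. The theorem must be read as presupposing a local solution flow near $0$, as in \cite{MR1752509}; in the present paper's application this is harmless, since the remainders $\pmb R_1, R_2$ in \eqref{eq:NonlinearStabilityLinearisedProblemWithRest} come from $C^2$ nonlinearities via Taylor expansion and are locally Lipschitz, which also yields the short-time a priori bound you need for $\|\widetilde{\N}(w_0)\|\le C'\|w_0\|^{1+\eta}$. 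Second, if $X$ is real, the spectrum and the approximate eigenvectors live in the complexification; one runs your estimates there and takes real and imaginary parts at the end, at the cost of a fixed constant --- a routine repair, but it should be said. (Minor: your symbol $\rho$ collides with the radius $\rho$ in hypothesis (3); and the final lower bound comes out as $\|u_n\|\geqslant c\,\epsilon_0$ for a fixed $c\in(0,1)$ rather than $\epsilon_0$ itself, which of course still proves instability.)
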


\begin{theorem}
	\label{thm:NonLinInst}
	Let $\UV$ be a regular stationary solution of problem \eqref{eq1}-\eqref{ini}. Assume that the corresponding linearisation operator $\L_{\infty}$, defined in Proposition \ref{thm:LinearOperatorLAnaliticSemigroupW1p} and with matrices \eqref{eq:MatrixDefInstability}, satisfies $s(\L_{\infty}) > 0$. Then $\UV$ is nonlinearly unstable in the space $C(\overline\Omega)^{n+1}$.
\end{theorem}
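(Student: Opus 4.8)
The plan is to reduce the nonlinear instability to the abstract criterion of Theorem~\ref{thm:SS}, applied on the Banach space $X = C(\overline\Omega)^{n+1}$. First I would recentre the problem at the stationary solution: writing a solution of \eqref{eq1}-\eqref{ini} as $(\pmb u, v) = (\pmb U + \pmb\varphi, V + \psi)$ and setting $w = (\pmb\varphi, \psi)$, I use the stationarity relations $\pmb f(\pmb U, V) = 0$ and $\Delta_\nu V + g(\pmb U, V) = 0$ together with a Taylor expansion of $\pmb f$ and $g$ about $\big(\pmb U(x), V(x)\big)$ to obtain an abstract equation $w_t = \L_\infty w + \N(w)$. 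Here $\L_\infty$ is precisely the linearisation operator of Proposition~\ref{thm:LinearOperatorLAnaliticSemigroupW1p} built from the matrices \eqref{eq:MatrixDefInstability}, and $\N(w)$ collects the second-order Taylor remainders of $\pmb f$ and $g$. It remains to verify the three hypotheses of Theorem~\ref{thm:SS}.

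Hypothesis~(1) is immediate. By Remark~\ref{thm:RegSolProp} the regular solution $(\pmb U, V)$ is continuous up to the boundary, so the coefficients $\pmb A, \pmb B, \pmb C, d$ lie in $C(\overline\Omega)$; Proposition~\ref{thm:LinearOperatorLAnaliticSemigroupW1p} then guarantees that $\L_\infty$ generates an analytic, hence strongly continuous, semigroup on $X$. Hypothesis~(2) follows directly from the assumption $s(\L_\infty) > 0$: since $s(\L_\infty) = \sup\{\Re\lambda : \lambda \in \sigma(\L_\infty)\} > 0$, there exists a spectral point with $\Re\lambda > \tfrac12 s(\L_\infty) > 0$, so $\sigma(\L_\infty)$ meets the right half-plane $\{\Re\lambda > 0\}$.

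The substance of the argument lies in hypothesis~(3), the superlinear bound on $\N$. Because $\pmb f$ and $g$ are $C^2$ and the graph $\{\big(\pmb U(x), V(x)\big) : x \in \overline\Omega\}$ is a compact subset of $\R^{n+1}$, Taylor's theorem with integral remainder yields a pointwise bound $|\N(w)(x)| \leqslant C |w(x)|^2$ whenever $|w(x)| \leqslant \rho$, where $C$ depends only on the uniform bound of the second derivatives of $\pmb f$ and $g$ over a fixed compact neighbourhood of that graph. Taking the supremum over $x \in \overline\Omega$ gives $\|\N(w)\|_X \leqslant C \|w\|_X^2$ for $\|w\|_X < \rho$, which is condition~(3) with $\eta = 1$; the continuity of the Nemytskii operator $w \mapsto \N(w)$ from $X$ into $X$ follows from continuity of the composition of continuous functions. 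With all three hypotheses in hand, Theorem~\ref{thm:SS} yields that $w \equiv 0$ is an unstable solution of the recentred equation, that is, $(\pmb U, V)$ is nonlinearly unstable in $C(\overline\Omega)^{n+1}$.

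I expect the only genuine subtlety to be keeping the Taylor constant $C$ uniform in $x$. This is handled automatically by the localisation inherent in an instability statement: one only needs the estimate for $\|w\|_X < \rho$, and on that ball the arguments of $\pmb f$ and $g$ remain in a fixed compact set where their $C^2$-norms are bounded. Thus the passage from the linear spectral condition $s(\L_\infty) > 0$ to genuine nonlinear instability is entirely routine once the abstract framework of Theorem~\ref{thm:SS} is set up on $C(\overline\Omega)^{n+1}$.
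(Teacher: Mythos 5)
Your proposal is correct and follows essentially the same route as the paper: recentre at $\UV$, write the perturbation equation as $w_t = \L_\infty w + \N(w)$, and apply Theorem \ref{thm:SS} on $X = C(\overline\Omega)^{n+1}$ with the Taylor-remainder estimate giving condition (3) with $\eta = 1$. The paper states these steps more tersely, while you additionally spell out the verification of hypotheses (1) and (2) and the compactness argument making the quadratic bound uniform in $x$, but there is no difference in substance.
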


\begin{proof}
	Applying the usual linearisation procedure we obtain that the perturbation of a stationary solution $\UV$ satisfies the semi-linear problem
	\begin{nalign}
		\label{eq:NonlinearStabilityLinearisedProblemWithRest}
		\frac{\partial}{\partial t}
		\begin{pmatrix}
			\pmb \varphi \\ \psi 
		\end{pmatrix} = \L_{\infty} 
		\begin{pmatrix}
			\pmb \varphi \\ \psi
		\end{pmatrix} + 
		\begin{pmatrix}
			\pmb R_1(\pmb \varphi, \psi) \\ R_2(\pmb \varphi, \psi)
		\end{pmatrix}
	\end{nalign}
	with the operator $\L_{\infty}$ given by formula  \eqref{eq:LinearSystemOperatorLDefinition} with matrices \eqref{eq:MatrixDefInstability} and where 
	the remainders $\pmb R_1$ and $R_2$ are obtained from nonlinearities $\pmb f, \, g$ via the Taylor expansion. We apply Theorem \ref{thm:SS} with the Banach space $	X=C(\overline\Omega)^{n+1}$. The remainders $\pmb R_1,\, R_2$ in system~\eqref{eq:NonlinearStabilityLinearisedProblemWithRest} satisfy the condition~\eqref{Con3} of Theorem \ref{thm:SS} with $\eta=1$.
\end{proof}




\begin{thebibliography}{10}
	
	\bibitem{MR480282}
	{\sc R.~G. Casten and C.~J. Holland}, {\em Instability results for reaction
		diffusion equations with {N}eumann boundary conditions}, J. Differential
	Equations, 27 (1978), pp.~266--273.
	
	\bibitem{MR2297947}
	{\sc L.~H. Chuan, T.~Tsujikawa, and A.~Yagi}, {\em Asymptotic behavior of
		solutions for forest kinematic model}, Funkcial. Ekvac., 49 (2006),
	pp.~427--449.
	
	\bibitem{CMCKS02}
	{\sc S.~Cygan, A.~Marciniak-Czochra, G.~Karch, and K.~Suzuki}, {\em Stable
		discontinuous stationary solutions to reaction-diffusion-{ODE} systems},
	preprint in arXiv,  (2021).
	
	\bibitem{MR1484209}
	{\sc K.-J. Engel}, {\em Operator matrices and systems of evolution equations},
	Surikaisekikenkyusho Kokyuroku,  (1996), pp.~61--80.
	\newblock Nonlinear evolution equations and their applications (Japanese)
	(Kyoto, 1995).
	
	\bibitem{MR1721989}
	{\sc K.-J. Engel and R.~Nagel}, {\em One-parameter semigroups for linear
		evolution equations}, vol.~194 of Graduate Texts in Mathematics,
	Springer-Verlag, New York, 2000.
	\newblock With contributions by S. Brendle, M. Campiti, T. Hahn, G. Metafune,
	G. Nickel, D. Pallara, C. Perazzoli, A. Rhandi, S. Romanelli and R.
	Schnaubelt.
	
	\bibitem{MR2833346}
	{\sc Y.~Golovaty, A.~Marciniak-Czochra, and M.~Ptashnyk}, {\em Stability of
		nonconstant stationary solutions in a reaction-diffusion equation coupled to
		the system of ordinary differential equations}, Commun. Pure Appl. Anal., 11
	(2012), pp.~229--241.
	
	\bibitem{MR884486}
	{\sc G.~Harris and C.~Martin}, {\em The roots of a polynomial vary continuously
		as a function of the coefficients}, Proc. Amer. Math. Soc., 100 (1987),
	pp.~390--392.
	
	\bibitem{MR3214197}
	{\sc S.~H\"{a}rting and A.~Marciniak-Czochra}, {\em Spike patterns in a
		reaction-diffusion {ODE} model with {T}uring instability}, Math. Methods
	Appl. Sci., 37 (2014), pp.~1377--1391.
	
	\bibitem{MR3583499}
	{\sc S.~H\"{a}rting, A.~Marciniak-Czochra, and I.~Takagi}, {\em Stable patterns
		with jump discontinuity in systems with {T}uring instability and hysteresis},
	Discrete Contin. Dyn. Syst., 37 (2017), pp.~757--800.
	
	\bibitem{MR684081}
	{\sc Y.~Hosono and M.~Mimura}, {\em Singular perturbations for pairs of
		two-point boundary value problems of {N}eumann type}, in Mathematical
	analysis on structures in nonlinear phenomena ({T}okyo, 1978), vol.~2 of
	Lecture Notes Numer. Appl. Anal., Kinokuniya Book Store, Tokyo, 1980,
	pp.~79--138.
	
	\bibitem{MR1335452}
	{\sc T.~Kato}, {\em Perturbation theory for linear operators}, Classics in
	Mathematics, Springer-Verlag, Berlin, 1995.
	\newblock Reprint of the 1980 edition.
	
	\bibitem{MR791838}
	{\sc K.~Kishimoto and H.~F. Weinberger}, {\em The spatial homogeneity of stable
		equilibria of some reaction-diffusion systems on convex domains}, J.
	Differential Equations, 58 (1985), pp.~15--21.
	
	\bibitem{Kthe2020HysteresisdrivenPF}
	{\sc A.~K\"{o}the, A.~Marciniak-Czochra, and I.~Takagi}, {\em Hysteresis-driven
		pattern formation in reaction-diffusion-{ODE} systems}, Discrete Contin. Dyn.
	Syst., 40 (2020), pp.~3595--3627.
	
	\bibitem{phdthesis}
	{\sc C.~Kowall}, {\em Uniform Shadow Limit Reduction for Reaction-Diffusion-ODE
		Systems}, PhD thesis, University of Heidelberg, 01 2021.
	
	\bibitem{MR3679890}
	{\sc Y.~Li, A.~Marciniak-Czochra, I.~Takagi, and B.~Wu}, {\em Bifurcation
		analysis of a diffusion-{ODE} model with {T}uring instability and
		hysteresis}, Hiroshima Math. J., 47 (2017), pp.~217--247.
	
	\bibitem{MR3973251}
	\leavevmode\vrule height 2pt depth -1.6pt width 23pt, {\em Steady states of
		{F}itz{H}ugh-{N}agumo system with non-diffusive activator and diffusive
		inhibitor}, Tohoku Math. J. (2), 71 (2019), pp.~243--279.
	
	\bibitem{MR3012216}
	{\sc A.~Lunardi}, {\em Analytic semigroups and optimal regularity in parabolic
		problems}, Modern Birkh\"{a}user Classics, Birkh\"{a}user/Springer Basel AG,
	Basel, 1995.
	\newblock [2013 reprint of the 1995 original] [MR1329547].
	
	\bibitem{MR2205561}
	{\sc A.~Marciniak-Czochra}, {\em Receptor-based models with hysteresis for
		pattern formation in hydra}, Math. Biosci., 199 (2006), pp.~97--119.
	
	\bibitem{MR3059757}
	\leavevmode\vrule height 2pt depth -1.6pt width 23pt, {\em Reaction-diffusion
		models of pattern formation in developmental biology}, in Mathematics and
	life sciences, vol.~1 of De Gruyter Ser. Math. Life Sci., De Gruyter, Berlin,
	2013, pp.~191--212.
	
	\bibitem{MR3039206}
	{\sc A.~Marciniak-Czochra, G.~Karch, and K.~Suzuki}, {\em Unstable patterns in
		reaction-diffusion model of early carcinogenesis}, J. Math. Pures Appl. (9),
	99 (2013), pp.~509--543.
	
	\bibitem{MR3600397}
	\leavevmode\vrule height 2pt depth -1.6pt width 23pt, {\em Instability of
		{T}uring patterns in reaction-diffusion-{ODE} systems}, J. Math. Biol., 74
	(2017), pp.~583--618.
	
	\bibitem{MR3345329}
	{\sc A.~Marciniak-Czochra, M.~Nakayama, and I.~Takagi}, {\em Pattern formation
		in a diffusion-{ODE} model with hysteresis}, Differential Integral Equations,
	28 (2015), pp.~655--694.
	
	\bibitem{MR555661}
	{\sc H.~Matano}, {\em Asymptotic behavior and stability of solutions of
		semilinear diffusion equations}, Publ. Res. Inst. Math. Sci., 15 (1979),
	pp.~401--454.
	
	\bibitem{MR579554}
	{\sc M.~Mimura, M.~Tabata, and Y.~Hosono}, {\em Multiple solutions of two-point
		boundary value problems of {N}eumann type with a small parameter}, SIAM J.
	Math. Anal., 11 (1980), pp.~613--631.
	
	\bibitem{MR2527521}
	{\sc G.~Mulone and V.~A. Solonnikov}, {\em Linearization principle for a system
		of equations of mixed type}, Nonlinear Anal., 71 (2009), pp.~1019--1031.
	
	\bibitem{MR2103689}
	{\sc W.-M. Ni}, {\em Qualitative properties of solutions to elliptic problems},
	in Stationary partial differential equations. {V}ol. {I}, Handb. Differ.
	Equ., North-Holland, Amsterdam, 2004, pp.~157--233.
	
	\bibitem{perthame2020fast}
	{\sc B.~Perthame and J.~Skrzeczkowski}, {\em Fast reaction limit with
		nonmonotone reaction function}, arXiv:2008.11086,  (2020).
	
	\bibitem{MR762825}
	{\sc M.~H. Protter and H.~F. Weinberger}, {\em Maximum principles in
		differential equations}, Springer-Verlag, New York, 1984.
	\newblock Corrected reprint of the 1967 original.
	
	\bibitem{MR0463990}
	{\sc P.~H. Rabinowitz}, {\em A bifurcation theorem for potential operators}, J.
	Functional Analysis, 25 (1977), pp.~412--424.
	
	\bibitem{MR845785}
	\leavevmode\vrule height 2pt depth -1.6pt width 23pt, {\em Minimax methods in
		critical point theory with applications to differential equations}, vol.~65
	of CBMS Regional Conference Series in Mathematics, Published for the
	Conference Board of the Mathematical Sciences, Washington, DC; by the
	American Mathematical Society, Providence, RI, 1986.
	
	\bibitem{MR1036472}
	{\sc K.~Sakamoto}, {\em Construction and stability analysis of transition layer
		solutions in reaction-diffusion systems}, Tohoku Math. J. (2), 42 (1990),
	pp.~17--44.
	
	\bibitem{MR727393}
	{\sc R.~Schaaf}, {\em Global behaviour of solution branches for some {N}eumann
		problems depending on one or several parameters}, J. Reine Angew. Math., 346
	(1984), pp.~1--31.
	
	\bibitem{MR808736}
	\leavevmode\vrule height 2pt depth -1.6pt width 23pt, {\em Stationary solutions
		of chemotaxis systems}, Trans. Amer. Math. Soc., 292 (1985), pp.~531--556.
	
	\bibitem{MR1752509}
	{\sc J.~Shatah and W.~Strauss}, {\em Spectral condition for instability}, in
	Nonlinear {PDE}'s, dynamics and continuum physics ({S}outh {H}adley, {MA},
	1998), vol.~255 of Contemp. Math., Amer. Math. Soc., Providence, RI, 2000,
	pp.~189--198.
	
	\bibitem{MR233240}
	{\sc S.~Steinberg}, {\em Meromorphic families of compact operators}, Arch.
	Rational Mech. Anal., 31 (1968/69), pp.~372--379.
	
	\bibitem{10780947202173109}
	{\sc I.~Takagi and C.~Zhang}, {\em Existence and stability of patterns in a
		reaction-diffusion-ode system with hysteresis in non-uniform media}, Discrete
	Contin. Dyn. Syst., 41 (2021), pp.~3109--3140.
	
	\bibitem{MR4213664}
	\leavevmode\vrule height 2pt depth -1.6pt width 23pt, {\em Pattern formation in
		a reaction-diffusion-{ODE} model with hysteresis in spatially heterogeneous
		environments}, J. Differential Equations, 280 (2021), pp.~928--966.
	
	\bibitem{MR3921221}
	{\sc J.~Wang}, {\em The stability of equilibria for a reaction-diffusion-{ODE}
		system on convex domains}, Appl. Math. Lett., 93 (2019), pp.~147--152.
	
	\bibitem{MR730252}
	{\sc H.~F. Weinberger}, {\em A simple system with a continuum of stable
		inhomogeneous steady states}, in Nonlinear partial differential equations in
	applied science ({T}okyo, 1982), vol.~81 of North-Holland Math. Stud.,
	North-Holland, Amsterdam, 1983, pp.~345--359.
	
	\bibitem{MR2573296}
	{\sc A.~Yagi}, {\em Abstract parabolic evolution equations and their
		applications}, Springer Monographs in Mathematics, Springer-Verlag, Berlin,
	2010.
	
\end{thebibliography}
\end{document}